\pgfplotsset{compat=newest}
\newtheorem{theo}{Theorem}[section]
\newtheorem*{theo*}{Theorem}
\newtheorem*{lem*}{Lemma}
\newtheorem{cor}{Corollary}[section]
\newtheorem*{cor*}{Corollary}
\newtheorem{prop}{Proposition}[section]
\newtheorem*{prop*}{Proposition}
\theoremstyle{definition}
\newtheorem*{defi*}{Definition}
\newtheorem*{conj*}{Conjecture}
\theoremstyle{definition}
\newtheorem*{ex*}{Example}
\newenvironment{rmk}%
{\vskip6pt%
\noindent%
{\it Remark.}}%
{\vskip6pt}
\theoremstyle{plain}
\newtheorem{thmint}{Theorem}
\newcommand\loceq{\mathrel{\stackrel{\makebox[0pt]{\mbox{\normalfont\tiny loc}}}{=}}}
\renewcommand{\[}{\begin{equation*}}
\renewcommand{\]}{\end{equation*}}
\def\C{\mathbb{C}}
\DeclareMathOperator{\im}{i}
\DeclareMathOperator{\imm}{Im}
\def \N {\mathbb N}
\def \A {\mathcal A}
\def \O {\mathcal O}
\def \H {\mathbb H}
\def \X {\mathcal X}
\def \K {\mathcal K}
\def \Q {\mathbb Q}
\def \R {\mathbb R}
\def \C {\mathbb C}
\def \Z {\mathbb Z}
\def \P {\mathbb P}
\def \B {\mathbb B}
\def \g {\mathfrak g}
\def \Na {\nabla}
\newcommand{\p}{\partial}
\renewcommand{\bar}{\overline}
\title{Calabi--Yau locally conformally K\"ahler manifolds}
\author{Giuseppe Barbaro}
\address{Giuseppe Barbaro\newline
		\textsc{\indent Institut for Matematik, Aarhus University\newline 
			\indent 8000, Aarhus C, Denmark}}
\email{g.barbaro@math.au.dk}
\author{Alexandra Otiman}
\address{Alexandra Otiman \newline
		\textsc{\indent Institut for Matematik, Aarhus University\newline 
			\indent 8000, Aarhus C, Denmark\newline
			\indent \indent and\newline
			\indent Institute of Mathematics ``Simion Stoilow'' of the Romanian Academy\newline 
			\indent 21 Calea Grivitei Street, 010702, Bucharest, Romania}}
	\email{aiotiman@math.au.dk} 
\keywords{}
\thanks{Both authors are supported by a DFF Sapere Aude grant ``Conformal geometry: metrics and cohomology". The first author is a member of GNSAGA of INdAM.}
\begin{document}
	
\begin{abstract}
We study compact locally conformally K\"ahler (lcK) manifolds which are Calabi--Yau, in the sense that $c_1^{BC}(X)=0$. 
First of all, we prove that all the known lcK manifolds which are Calabi--Yau are Vaisman.
Then we prove that an lcK Chern--Ricci flat metric that is Gauduchon is necessarily Vaisman. 
Finally, specializing to Calabi--Yau solvmanifolds with left-invariant complex structure, we prove that a left-invariant metric is lcK if and only if it is Vaisman. Therefore, they are finite quotients of the Kodaira manifold.
\end{abstract}
	
\maketitle
	
\section{Introduction}\label{sec: intro}

The notion of Calabi--Yau manifolds has traditionally been introduced and studied in the K\"ahler setting, and has been defined as the vanishing of the first Chern class.
In the non-K\" ahler counterpart, several analogs of the Calabi--Yau notion have been proposed, since simply $c_1(X)=0$ might be a too broad condition. 
We adopt here the point of view introduced by Tosatti in \cite{tos}, which represents a condition of cohomological nature. Namely, a compact complex manifold $X$ is said Calabi--Yau if the first Chern class of its anti-canonical bundle $K_X$ vanishes in the Bott--Chern cohomology $H^{\bullet,\bullet}_{BC}(X)=\tfrac{\ker d}{\imm\p\bar\p}$, which is a refinement of the de Rham cohomology. We write $c_1^{BC}(X)=0$.

In this note we study compact locally conformally K\" ahler manifolds with special first Bott--Chern class, focusing on the Calabi--Yau case. {\em Locally conformally K\" ahler} (lcK) metrics are Hermitian metrics $\omega$ locally admitting a conformal change that makes them K\"ahler. Equivalently, they satisfy the integrability condition $d\omega=\theta \wedge \omega$ with $\theta$ a closed $1$-form called {\em Lee form}.
In this paper, we will call a metric lcK if $\theta$ is not exact, i.e. if the  metric does not admit a global conformal change to a K\"ahler one. These are sometimes called {\em strictly lcK} in the literature, so we just decided to absorb the 'strict' in the definition.
If in addition $\theta$ is also parallel with respect to the Levi--Civita connection of $\omega$, then the metric is called {\em Vaisman} and shares many remarkable properties with K\" ahler metrics. In \cite{i}, Vaisman manifolds for which $c_1^{BC}(X)$ has a sign were studied, including the Calabi--Yau ones, for which a version of the Beauville-Bogomolov decomposition theorem was proven. 
A natural question that arises 
is whether there exists any Calabi--Yau lcK manifold that is not Vaisman. On this regard, we give a sufficient condition on the Chern--Ricci curvature of an lcK metric to be Vaisman. 

\begin{thmint}\label{thm A}
    Let $(M, J)$ be a compact lcK manifold such that the metric $\omega$ is Chern--Ricci flat (or more generally, $Ric(\omega)=tdJ\theta$, $t \leq 0$). If $\omega$ is Gauduchon, then it is Vaisman.
\end{thmint}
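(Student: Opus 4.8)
\noindent\emph{Proof strategy.} The plan is to reduce the Vaisman property to the vanishing of the covariant derivative of the Lee form and then to force this by a Bochner argument in which the Chern--Ricci hypothesis and the Gauduchon condition enter with compatible signs. Since $\omega$ is lcK the Lee form is closed, $d\theta=0$, so its Levi--Civita derivative is symmetric and satisfies $\nabla\theta=\tfrac12\L_{\theta^\sharp}g$; hence $\omega$ is Vaisman exactly when $\nabla\theta\equiv0$, i.e. when the Lee field $\theta^\sharp$ is Killing. It therefore suffices to prove
\[
\int_M|\nabla\theta|^2\,\frac{\omega^n}{n!}=0.
\]
It is illuminating to phrase the target geometrically through the defect $\Phi:=dJ\theta-\theta\wedge J\theta+|\theta|^2\omega$: using $d\omega=\theta\wedge\omega$ one checks that $\Phi\equiv0$ forces $d|\theta|^2\wedge\omega=0$, hence $|\theta|$ constant, and recovers the Vaisman structure equation, so that $\Phi\equiv0$ is equivalent to $\nabla\theta\equiv0$.

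The argument rests on two structural inputs. The first is the shape of the Chern--Ricci form of an lcK metric: writing $\omega=e^{f}\omega_K$ locally with $\theta=df$ and $i\partial\bar\partial f=\tfrac12\,dJ\theta$, the local identity $\mathrm{Ric}(\omega)=\mathrm{Ric}(\omega_K)-n\,i\partial\bar\partial f$ patches to
\[
\mathrm{Ric}(\omega)=\rho_0-\tfrac{n}{2}\,dJ\theta,
\]
with $\rho_0$ a globally defined closed real $(1,1)$-form and the sign fixed by the chosen convention for $J$ on forms. The second is the meaning of the Gauduchon condition: expanding $\partial\bar\partial\omega^{n-1}$ by means of $d\omega^{n-1}=(n-1)\,\theta\wedge\omega^{n-1}$ gives
\[
\partial\bar\partial\omega^{n-1}=\tfrac{n-1}{n}\Big(\Lambda\partial\theta^{0,1}-\tfrac{i}{2}(n-1)|\theta|^2\Big)\omega^n,
\]
so that $\omega$ is Gauduchon precisely when $\Lambda\,dJ\theta=-(n-1)|\theta|^2$, equivalently when $\Phi$ is a primitive $(1,1)$-form.

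I would then run the Bochner machinery. Applying the Weitzenböck formula to the closed form $\theta$ and integrating gives $\int_M|\nabla\theta|^2=\int_M|d^*\theta|^2-\int_M\mathrm{Ric}^g(\theta,\theta)$, where $\mathrm{Ric}^g$ is the Riemannian Ricci tensor. Since the Chern and Levi--Civita connections differ by torsion built entirely from $\theta$, the quantity $\mathrm{Ric}^g(\theta,\theta)$ can be rewritten as a Chern--Ricci contribution $\langle\mathrm{Ric}(\omega),\theta\wedge J\theta\rangle$ plus explicit terms quadratic in $\nabla\theta$ and quartic in $\theta$. Substituting $\mathrm{Ric}(\omega)=t\,dJ\theta$ turns the Chern--Ricci contribution into a multiple of $t\,\langle dJ\theta,\theta\wedge J\theta\rangle$, and the Gauduchon identity $\Lambda\,dJ\theta=-(n-1)|\theta|^2$ is exactly what is needed to absorb the undifferentiated $|\theta|^4$-terms. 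I expect the outcome to collapse to
\[
\int_M|\nabla\theta|^2\,\frac{\omega^n}{n!}=c_n\,t\,\big\|dJ\theta\big\|_{L^2}^2,\qquad c_n>0.
\]
As the left-hand side is non-negative while $t\le0$, both sides vanish, so $\nabla\theta\equiv0$ and $\omega$ is Vaisman; the Chern--Ricci flat case $t=0$ is the cleanest, the right-hand side being zero outright.

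The hard part will be the penultimate step: establishing the comparison between $\mathrm{Ric}^g(\theta,\theta)$ and $\mathrm{Ric}(\omega)$ with its precise constants, and verifying that the undifferentiated $|\theta|^4$-contributions cancel \emph{exactly} once the Gauduchon relation is imposed, so that what survives is a genuine non-negative multiple of $-t$. Everything else --- the reduction to $\nabla\theta$, the Chern--Ricci formula, the identification of the Gauduchon condition with primitivity of $\Phi$, and the final sign chase --- is comparatively routine bookkeeping.
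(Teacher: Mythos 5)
Your overall strategy is aligned with the paper's: reduce Vaisman to $\nabla\theta\equiv 0$, use that Gauduchon for an lcK metric means $d^*\theta=0$ so that $\Delta\theta=0$, and run a Weitzenb\"ock/Bochner argument in which $\mathrm{Ric}^{g}(\theta^\sharp,\theta^\sharp)$ is rewritten via the conformal (Weyl) relation $Ric^{W,J}=\tfrac{n}{2}dJ\theta+Ric(\omega)$. Up to and including the cancellation of the $|\theta|^4$-terms your bookkeeping is consistent with the paper's formula \eqref{Ricci}. The gap is in the step you yourself flag as the hard one, and it is not just a matter of constants: the claimed collapse
\[
\int_M|\nabla\theta|^2\,\tfrac{\omega^n}{n!}=c_n\,t\,\|dJ\theta\|_{L^2}^2,\qquad c_n>0,
\]
cannot be correct. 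On the diagonal Hopf manifold the standard Vaisman metric has $\nabla\theta=0$, $dJ\theta\neq 0$, and $Ric(\omega)=t\,dJ\theta$ with $t<0$ (the transverse K\"ahler metric is Fubini--Study, hence transversally K\"ahler--Einstein with positive constant), so the left-hand side would be $0$ while the right-hand side would be strictly negative. What actually survives the Bochner identity is not $t\|dJ\theta\|^2$ but the term $\bigl(\tfrac{n}{2}+t\bigr)\int_M\nabla\theta(J\theta^\sharp,J\theta^\sharp)\,\mathrm{vol}(\omega)$, which has no a priori sign; the pairing $\langle dJ\theta,\theta\wedge J\theta\rangle$ is a combination of $\nabla\theta(\theta^\sharp,\theta^\sharp)$ and $\nabla\theta(J\theta^\sharp,J\theta^\sharp)$, not a norm.

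The missing idea is a \emph{second}, independent integral identity to eliminate that indefinite term. The paper obtains it from the contracted second Bianchi identity $\delta\,Ric^{LC}+\tfrac12 ds^{LC}=0$ applied to the explicit expressions for $Ric^{LC}$ and $s^{LC}$, paired in $L^2$ with $\theta$; this yields a relation involving $\|(\nabla\theta)^{1,1}\|^2$, $\|\nabla\theta\|^2$ and the same term $\nabla\theta(J\theta^\sharp,J\theta^\sharp)$. Combining it with the Bochner identity gives
\[
\int_M\Bigl((n+2t)\|(\nabla\theta)^{1,1}\|^2-n\|\nabla\theta\|^{2}\Bigr)\mathrm{vol}(\omega)=0,
\]
whose integrand is nonpositive for $t\le 0$ because $\|(\nabla\theta)^{1,1}\|\le\|\nabla\theta\|$; the equality analysis (separately for $t<0$ and $t=0$, the latter feeding back into the Bochner identity) then forces $\nabla\theta=0$. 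Note that this final identity is compatible with Vaisman metrics for any $t$, unlike your proposed one. Without the Bianchi input your argument does not close.
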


The motivations to consider locally conformally K\"ahler metrics that are Chern--Ricci flat, or more generally, their Chern--Ricci form is proportional to $dJ\theta$ are multiple. 
Firstly of all, the class of $dJ\theta$ is a non-zero element in $H^{1, 1}_{BC}(X)$ that represents also the Chern curvature of the holomorphic flat line bundle associated to the Lee form $\theta$, endowed with the Hermitian structure induced from $\omega$. 
Secondly, according to a conjecture of Ornea-Verbitsky \cite{ov25}, $-dJ\theta$ is Bott--Chern cohomologous to a positive current on any lcK manifold, which would lead to a sign of the first Bott--Chern class in this case. 
Moreover, it is well understood that the right Einstein condition to impose for Vaisman metrics is $Ric(\omega)=tdJ\theta$ for $t\in\R$, meaning that we do not ask the Ricci form to be a multiple of the metric but of the transversal K\"ahler metric. On this regard, we provide another motivation for this Einstein equation in Section \ref{s: sasaki} as we relate it to the {\em Sasaki $\eta$-Einstein} equation for the underlying Sasaki structure.

We remark that on a Calabi--Yau manifold, any Hermitian metric is conformal to a Chern--Ricci flat one, but this priviledged metric is not necessarily the Gauduchon metric in the corresponding conformal class.
Indeed, explicit lcK Chern--Ricci flat metrics which are not Vaisman can be produced, see the examples in Section \ref{s: main}.
However, the examples we have of lcK manifolds suggest that even if these conditions do not force Vaisman on the level of the metric, they do it on the level of the manifold.
Let us explain this more carefully.
We prove that all the known lcK non-Vaisman manifolds are not Calabi--Yau. These are some {\em Inoue surfaces, Oeljeklaus–Toma manifold, Hopf manifolds and Kato
manifolds}. We also prove that the same holds for lcK manifolds obtained by deformations of Vaisman ones or modifications of other lcK's, since these are the only know constructing methods.
We state this result here in a contracted version while more details are in Section \ref{s: known lck}.
\begin{thmint}\label{thm B}
    All the known non-Vaisman lcK manifolds are not Calabi--Yau.
\end{thmint}
These examples give evidence towards the possibility that compact lcK manifolds with vanishing first Bott--Chern class are actually Vaisman. 
We thus state this as a conjecture.
\begin{conj*}
    Any compact lcK Calabi--Yau manifold is Vaisman.
\end{conj*}


In the homogeneous setting, the situation becomes much simpler. Indeed, for a compact homogeneous manifold equipped with an invariant Hermitian structure, the metric is automatically Gauduchon and if the Chern--Ricci form is $\p\bar\p$-exact then it is actually zero.
We thus can specialize our analysis of the lcK Calabi--Yau conditions to solvmanifolds, obtaining, in particular, a characterization of the ones with a left-invariant Vaisman structure.
\begin{thmint}\label{thm C}
    Let $(\Gamma \backslash G, J)$ be a solvmanifold with left-invariant complex structure admitting a left-invariant lcK metric. The following are equivalent:
  \begin{enumerate}
\item $(\Gamma \backslash G, J)$ is Calabi--Yau,
      \item $(\Gamma \backslash G, J)$ admits a left-invariant Vaisman metric.
  \end{enumerate}
  Moreover, such a solvmanifold is a finite quotient of a Kodaira manifold.
\end{thmint}
We shall mention that the classification of geometric structures on solvmanifolds is a wide and long-standing problem. For lcK and Vaisman structures on solvmanifolds and nilmanifolds we recall \cite{has06, uga07, saw07, kas, saw17, baz17, ao, gomes}.
In particular, a characterization of Vaisman solvmanifolds with complex structure not necessarily left-invariant was already obtained in \cite{gomes}, while we focus on the left-invariant case since we link the Lie algebra structure with the notion of Calabi--Yau. 
As we shall see in Section \ref{sec: solv}, the class of invariant Vaisman metrics can be also characterized as invariant {\em lcK with potential}. Indeed, while Vaisman metrics are in particular lcK with potential, but not vice-versa in the general context, in the invariant setting, the two notions coincide.

Finally, we remark the difference with the non-integrable setting, where there are solvmanifolds which admit left-invariant locally conformally almost-K\"ahler structures with vanishing Chern--Ricci tensor but $\theta$ not Levi--Civita-parallel. These examples are provided in \cite[4.1 (i) and (ii)]{bl}.

\hfill

\noindent {\it Acknowledgements.} We would like to thank Nicolina Istrati for a careful reading of the preprint and useful remarks.



\section{Preliminaries}\label{sec: prelim}

Any Hermitian metric $\omega$ on a complex manifold $X$ induces a Hermitian structure on $K_X$. Its Chern curvature with respect to this structure is given locally by
$\Theta_{K_X, \omega}=\mathrm{i}\partial \overline{\partial}\, \mathrm{log} \, \mathrm{det}\, \omega$ and for any two Hermitian metrics, $\omega_1$ and $\omega_2$, it holds 
\begin{equation*}
    \Theta_{K_X, \omega_1}-\Theta_{K_X, \omega_2}=\mathrm{i}\partial \overline{\partial}\mathrm{log} \frac{\omega_1^n}{\omega_2^n}.
\end{equation*}
Therefore, the first Bott--Chern class $c_1^{BC}(X)$ is defined as
\begin{equation*}
    c_1^{BC}(X)=c_1^{BC}(K_X^*)=[-\mathrm{i}\partial \overline{\partial}\, \mathrm{log} \, \mathrm{det}\, \omega] \in H^{1, 1}_{BC}(X),
\end{equation*}
where
$$H^{p,q}_{BC}(X)=\frac{\mathrm{Ker} \left( d:\A^{p,q}\rightarrow\A^{p+q+1}\right)}{\mathrm{Im}\left( \partial \overline{\partial}:\A^{p-1,q-1}\rightarrow\A^{p,q}\right)}.$$
The $2$-form that is locally $-\mathrm{i}\partial \overline{\partial}\, \mathrm{log} \, \mathrm{det}\, \omega$ is the Chern--Ricci form of $\omega$, $Ric(\omega)$. 
Notice that a Calabi--Yau manifold, in the sense that $c_1^{BC}(X)=0$, has automatically $c_1(X)=0$.
Moreover, on a Calabi--Yau manifold, any Hermitian metric $\omega$ satisfies $Ric(\omega)=\mathrm{i}\partial \overline{\partial}f$ for a smooth globally defined function $f$. Therefore, the conformal representative $e^{\frac{f}{n}}\omega$ is Chern--Ricci flat. 

The first Bott--Chern class is {\em positive} (resp. {\em negative}) when it contains a representative that is a semi-positive (resp. semi-negative) $(1,1)$-form.
Notice that, if $c_1^{BC}(X)$ contains a strictly positive or negative form then $X$ K\"ahler. 
Finally, these positivity and negativity notions can be relaxed in the sense of currents by saying that $c_1^{BC}(X)$ contains a positive or negative current.  

\medskip

A complex manifold \((M, J)\) is called \textit{locally conformally K\"ahler} (lcK) if there exists a Hermitian metric \(g\) with fundamental two-form satisfying $d\omega = \theta \wedge \omega$, where \(\theta\) is a closed one-form called the \textit{Lee form}. Vaisman manifolds, introduced in \cite{vai} as generalized Hopf manifolds, admit an lcK metric for which the Lee form is parallel with respect to the Levi-Civita connection. In this setting, the vector fields $\theta^{\sharp}$ and $J\theta^{\sharp}$ are real holomorphic, Killing and satisfy $[\theta^{\sharp}, J\theta^{\sharp}]=0$, so the distribution generated by $\theta^{\sharp}$ and $J\theta^{\sharp}$ is integrable and gives rise to a holomorphic foliation, which is independent from the Vaisman metric, hence it is called \textit{canonical foliation}. Vaisman metrics satisfy
$$\|\theta\|^2\omega=\theta\wedge J\theta - dJ\theta,$$
while structures having this form are characterized as {\em lcK with potential}, and were introduced in \cite{ov10}. More precisely, an lcK metric is called lcK with potential if there exists $f \in C^\infty(M, \R)$ so that 
$$\omega = d_\theta Jd_\theta f, \quad \text{where } d_\theta = d-\theta\wedge.$$

Alternatively, the lcK condition can also be characterized by the existence of an open cover $\{U_i\}_i$ of \(M\) together with smooth functions \(f_i : U_i \to \mathbb{R}\), such that on each \(U_i\), $\omega|_{U_i} = e^{f_i} \omega_i$, where each \(\omega_i\) is a K\"ahler form.  Let $\{U_i\}_i$ be a covering of $M$ such that $\theta_{|U_i}=df_i$. The Levi-Civita connections of the local K\" ahler metrics $e^{-f_i}\omega$ glue to a global connection $\nabla^{W}$, known as the {\em Weyl connection}. While $\nabla^{W}$ preserves the local properties of the Levi-Civita connection of a K\" ahler metric, such as being complex $\nabla^{W}J=0$ and being torsion-free, it does not preserve the metric, but its conformal class, satisfying
    $$ \nabla^W \omega = \theta \otimes \omega .$$ 
Moreover, $\nabla^{W}$ is the unique connection with these properties and it is related to the Levi--Civita connection $\nabla^{LC}$ associated to $\omega$ by
    \begin{equation*}\label{weyl_con}
        \nabla^W_xy = \nabla^{LC}_xy -\frac{1}{2}\theta(x)y -\frac{1}{2}\theta(y)x +\frac{1}{2}g(x,y)\theta^\sharp .
    \end{equation*}

Let $\pi: \widetilde{M} \rightarrow M$ be the universal cover of $M$ and $\pi^*\theta=df$. Then $\Omega:=e^{-f}\pi^*\omega$ is a K\" ahler metric on $\widetilde{M}$ and its Levi-Civita connection $\widetilde{\nabla}^{LC}$ satisfies:
\begin{equation*}
\widetilde{\nabla}^{LC}=\pi^*\nabla^{W}.
\end{equation*}
As a consequence, the Ricci tensor of the Weyl connection satisfies $\pi^{*}Ric^{W}=Ric^{\widetilde{\nabla}^{LC}}=Ric^{Ch}_{\Omega}$, the last equality thanks to $\Omega$ being K\" ahler. Therefore, $Ric^{W}$ has all the symmetries of a Ricci tensor of a K\" ahler metric, and thus the Weyl form can be defined, like in the K\"ahler setting, as $Ric^{W, J}:=Ric^{W}(J\cdot, \cdot)$. 
The Ricci form of $\Omega$, given by $Ric^{Ch, J}_{\Omega}:=Ric^{Ch}_{\Omega}(J\cdot, \cdot)$ satisfies:
\begin{equation*}
    Ric^{Ch, J}_{\Omega}=-\im \partial\overline{\partial}\mathrm{log}\, \mathrm{det}\,\Omega=-\im\partial\overline{\partial}\mathrm{log}\, \mathrm{det}\,\left( e^{-f}\pi^*\omega \right)=n\partial\overline{\partial}f+\pi^{*}Ric^{Ch}_{\omega}=\frac{n}{2}\pi^{*}dJ\theta+\pi^{*}Ric(\omega),
\end{equation*}
So finally the following relation between the Ricci--Weyl form and the Chern--Ricci form holds 
\begin{equation}\label{Weyl-form}
    Ric^{W, J}=\frac{n}{2}dJ\theta+Ric(\omega).
\end{equation}

\medskip

We end this preliminary section recalling that for an lcK manifold the Levi--Civita connection satisfies (see e.g. \cite[(3)]{mor17})
$$\nabla_X J = \frac{1}{2}
 (X \wedge J\theta + JX \wedge \theta),\quad \forall X \in TM,$$
where given $\alpha$ a 1-form and $X,Y\in TM$, $X \wedge\alpha$ is the endomorphism of the tangent bundle defined by
$$ (X \wedge\alpha)(Y) := g(X, Y )\alpha^\sharp - \alpha(Y)X.$$
From this we deduce that
\begin{equation*}\label{eq: NaJtheta}
    \nabla J\theta = -(\nabla\theta)(\cdot,J\cdot) - \frac{1}{2}\|\theta\|^2\omega + \frac{1}{2} \theta\wedge J\theta.
\end{equation*}
In particular, $\nabla J\theta = (\nabla J\theta)^{sym,J_-} + (\nabla J\theta)^{skew,J_+}$, and hence $d^*J\theta =0$. The same would also follow by applying the Lie derivative $\mathcal{L}_{J\theta^\sharp}$ to the identity $\omega=g(J\cdot,\cdot)$.

\section{First Bott--Chern class of lcK non-Vaisman manifolds}\label{s: known lck}

In this section, we go through all the known classes of lcK manifolds and compute their first Bott--Chern class. 
Other than Inoue, Hopf, and Kato surfaces, the known classes of manifolds carrying lcK metrics in higher dimensions are Oeljeklaus--Toma (OT) manifolds, Hopf manifolds and Kato manifolds. 
It is well known that OT and Hopf manifolds have vanishing first Chern class. We shall see that the refined Bott--Chern class is however non-zero. 
For the Kato manifolds, we prove that the first Chern class is not vanishing and a more precise characterization of the first Bott--Chern class is possible when restricting to \textit{toric Kato manifolds}. 
We also prove the non-vanishing of the first Bott--Chern class of blown-up manifolds and modifications in general as this is a constructing method for lcK manifolds.
Finally, we prove the same for lcK with potential non-Vaisman manifolds.
We gather these results in the following statement, whose proof is divided family by family in dedicated subsections. 
\begin{theo*}[Theorem \ref{thm B}]
    All the known lcK non-Vaisman manifolds have non-vanishing first Bott--Chern class. In detail, if $X$ is a
    \begin{itemize}
        \item lcK Inoue surface of type $\mathcal{S}^+$, then $c_1^{BC}(X)<0$.
        \item OT manifold of type $(s,1)$, then $c_1^{BC}(X)<0$.
        \item primary Hopf manifold, then $c_1^{BC}(X) \neq 0$ and it contains a positive $(1,1)$-current. If it is diagonal, then $c_1^{BC}(X) > 0$.
        \item Kato manifold, then $c_1(X) \neq 0$. In particular, $c_1^{BC}(X)\neq0$, and if $X$ is a toric Kato manifold, $c_1^{BC}(X)$ contains a positive $(1,1)$-current.
        \item compact lcK with potential non-Vaisman manifold with $\dim_\C X\geq3$, then $c_1^{BC}(X)\neq0$, and the statement is also true for surfaces if the GSS conjecture holds.
        \item modification of a smooth manifold, then $c_1^{BC}(X)\neq0$.
    \end{itemize}
\end{theo*}

More details are in the statements of Proposition \ref{prop: Inoue} for Inoue surfaces; Proposition \ref{prop: OT} for OT manifolds; Propositions \ref{prop: c1 diag Hopf} and \ref{prop: deformation} for Hopf manifolds; Propositions \ref{p: Kato} and \ref{p: toric Kato} for Kato manifolds; Propositions \ref{p: with potential} and \ref{p: surf with potential} for lcK manifolds with potential; Propositions \ref{prop: blow-up} and \ref{prop: modifications} for modifications.

\subsection{Inoue surfaces}
In \cite{ino1,ino2,ino3} Inoue introduced three families of compact complex surfaces, $\mathcal{S}_M$, $\mathcal{S}^+$, and $\mathcal{S}^-$, as quotients of $\C\times\H$ (where $\H$ denotes the complex upper half-plane) by a cocompact group of affine transformations.
It was proved in \cite{tri} that Inoue surfaces of class $\mathcal{S}_M$ and $\mathcal S^-$, and some wide subclasses of the Inoue surface $\mathcal{S}^+$ admit lcK metrics. 
The Inoue--Bombieri surfaces $\mathcal{S}_M$ have been generalized as Oeljeklaus--Toma manifolds and will be hence treated in Section \ref{s: OT}.
On the other hand, it is known that the surfaces of class $\mathcal{S}^-$ are double covered by surfaces $\mathcal{S}^+$. Therefore, since the properties of the Chern class we want to study are preserved by finite quotients, we will focus on Inoue surfaces of class $\mathcal{S}^+$.
Let us briefly recall their construction.

Given $N=(n_{ij}) \in \mathrm{SL}_2(\Z)$ a  matrix with real eigenvalues $\alpha >1$ and $1/\alpha$ and $(a_1, a_2)^{t}, (b_1, b_2)^{t}$  real eigenvectors corresponding to $\alpha$ and $1/\alpha$. 
Fix some integers $p, q, r$ with $r \neq 0$ and a complex number $\xi\in\C$. 
Let $e_1, e_2$ be defined as
$$e_i = \tfrac{1}{2}n_{i1}(n_{i1}-1)a_1b_1 + \tfrac{1}{2}n_{i2}(n_{i2}-1)a_2b_2 + n_{i1}n_{i2}b_1a_2$$ 
and $c_1, c_2$ defined by
$$(c_1, c_2)=(c_1, c_2) \cdot N^t + (e_1, e_2) + \frac{b_1a_2-b_2a_1}{r}(p, q).$$
Take coordinates $\left<z,w\right>$ on $\C\times\H$, and define the group $G^+_{N, p, q, r, \xi}$ generated by
\begin{equation*}
\begin{cases}
    g_0(z, w) = (z + \xi, \alpha w)\\
    g_i(z, w)  =(z+b_iw+c_i, w+a_i), \qquad \text{for } i=1, 2\\
    g_3(z, w)  =\left(z+ \frac{1}{r}(b_1a_2-b_2a_1), w\right).
\end{cases}
\end{equation*}
Then the Inoue surface $\mathcal{S}^+$ with parameters $(N, p, q, r, \xi)$ is defined as $\C \times \H/G^+_{N, p, q, r, \xi}$.
These carry lcK metrics if and only if $\xi\in\R$ \cite{tri, bel00}, in which case an lcK metric is given by
$$\omega = \im \frac{1+(\imm z)^2}{(\imm w)^2}dw \wedge d\bar w + \im \frac{\imm z}{\imm w}(dz \wedge d\bar w + dw \wedge d\bar z) + \im dz \wedge d\bar z.$$
We can thus directly compute the first Bott--Chern class of these surfaces and prove that they are not Calabi--Yau.

\begin{prop}\label{prop: Inoue}
    Let $X$ be an lcK Inoue surface of type $\mathcal{S}^+$, then $c_1^{BC}(X)<0$.
\end{prop}
\begin{proof}
    With the same notations above, we can compute the Chern--Ricci form of $\omega$.
    This is locally given by
    $$Ric(\omega) \loceq \im \p\bar\p\log\left(\imm w\right)^2,$$
    and globally by
    $$Ric(\omega)= -\frac{\im}{2}\frac{dw\wedge d\bar w}{(\imm w_i)^2}. $$
    This is a semi-negative form, hence, in particular, it is not $\p\bar\p$-exact.
\end{proof}

\subsection{Oeljeklaus--Toma manifolds}\label{s: OT}
Oeljeklaus--Toma manifolds are complex manifolds introduced in \cite{ot05} with a number-theoretical construction to generalize Inoue--Bombieri surface of type $S_M$. 
They are compact quotients of $\H^s\times\C^t$, hence we will refer to them as OT-manifolds of type $(s,t)$.
More precisely, their construction is as follows (see \cite{ot05} for details).
Fix $s,t\in\N\setminus\{0\}$, and take an algebraic number field given as $K\simeq\Q[X]\slash(f)$,
where $f\in\Q[X]$ is a monic irreducible polynomial with $s$ real roots, and $2t$ complex roots.
The field $K$ admits $s+2t$ embeddings in $\C$ by the conjugate roots, more precisely, $s$ real embeddings and $2t$ complex embeddings:
$$
\begin{gathered}
\sigma_1,\ldots,\sigma_s\colon K\to \mathbb{R},\\
\sigma_{s+1},\ldots,\sigma_{s+t},\sigma_{s+t+1}=\overline\sigma_{s+1},\ldots,\sigma_{s+2t}=\overline\sigma_{s+t} \colon K\to\mathbb{C}.
\end{gathered}
$$
Let's call $\mathcal O_K$ the ring of algebraic integers of $K$; $\mathcal O_K^*$ the multiplicative group of units of $\mathcal O_K$; and $\mathcal O_K^{*,+}$ the group of totally positive units.
Thanks to \cite{ot05}, there always exists an {\em admissible subgroup} $U\subset\O_K^{*,+}$, of rank $s$, such that the fixed-point-free action
$$ \O_K\rtimes U\circlearrowleft\mathbb H^s\times\mathbb C^t $$
induced by
\begin{align*}
    T\colon &\O_K \circlearrowleft \H^s\times\C^t, \\
T_a(w_1,\ldots,w_s,z_{s+1},\ldots,z_{s+t}) &:= (w_1+\sigma_1(a),\ldots,z_{s+t}+\sigma_{s+t}(a)),\\
R\colon &\O_K^{*,+} \circlearrowleft \H^s\times\C^t, \\
R_u(w_1,\ldots,w_s,z_{s+1},\ldots,z_{s+t}) &:= (w_1\cdot \sigma_1(u),\ldots,z_{s+t}\cdot \sigma_{s+t}(u)),
\end{align*}
is properly discontinuous and co-compact.
Thus the {\em Oeljeklaus--Toma manifold} of type $(s,t)$ associated to the algebraic number field $K$ and to the admissible subgroup $U$ is
$$ X(K,U) :=\H^s\times\C^t \slash \mathcal O_K\rtimes U.$$

It was proven in \cite{kas} that OT manifolds can be endowed with
a structure of solvmanifold, in such a way that the complex structure is left-invariant.
Moreover, they admit an lcK structure if and only if $t=1$ \cite{dv23}, and this structure is not Vaisman.
So let us restrict to this case.

\begin{prop}\label{prop: OT}
    Let $X$ be an OT manifold of type $(s,1)$, then $c_1^{BC}(X)<0$.
\end{prop}
\begin{proof}
    Consider coordinates $\left<w_1,\ldots,w_s\right>$ on $\H^s$ and $z$ on $\C$.
    We have the standard Hermitian metric on $X$ given by
    $$\omega= \sum_{i=1}^s\frac{\im}{(\imm w_i)^2}dw_i\wedge d\bar w_i + \im\left(\prod_{i=1}^s \imm w_i\right)dz\wedge d\bar z.$$
    The Chern--Ricci form is then locally given by
    $$Ric(\omega) \loceq \im \p\bar\p\log\left(\prod_{i=1}^s\imm w_i\right),$$
    and globally by
    $$Ric(\omega)= -\frac{\im}{4}\sum_{i=1}^s\frac{dw_i\wedge d\bar w_i}{(\imm w_i)^2}, $$
    Therefore, it is semi-negative and not $\p\bar\p$-exact. 
\end{proof}

\subsection{Hopf manifolds}\label{sec: Hopf}

Primary Hopf manifolds are complex manifolds constructed as quotients of $\mathbb{C}^n \setminus \{0\}$ by an infinite cyclic group, whereas secondary Hopf manifolds are finite quotients of them.
We thus focus on primary Hopf surfaces. 
They are known to carry lcK metrics by \cite{go98, ov16, ov23, ko}, while they are Vaisman only when their fundamental group is generated by a diagonal contraction $\gamma:\mathbb{C}^n \setminus \{0\} \rightarrow \mathbb{C}^n \setminus \{0\}$, $\gamma(z_1, \ldots, z_n)=(\lambda_1 z_1, \ldots, \lambda_n z_n)$, see e.g. \cite{io25,bel00}. 
In this case, when also $|\lambda_1|=\ldots=|\lambda_n|$, it was proven in \cite{tos} that they cannot be Calabi--Yau, by using an explicit Vaisman metric. 
For a generic Hopf manifold, an explicit lcK metric cannot be given. 
Therefore, we provide a more general argument that does not rely on explicit computations starting from an lcK metric. 
This applies to any primary Hopf manifold, regardless of the associated contraction, while we remark that for Hopf surfaces it was noticed in \cite{tw} that $c_1^{BC}\neq0$. 
It is based on a deformation result that allows to deform any primary Hopf manifold in a diagonal one. 
Therefore, we will first prove that $c_1^{BC}>0$ for diagonal (i.e. Vaisman) Hopf manifolds and then extend this to any Hopf manifold.

\begin{prop}\label{prop: c1 diag Hopf}
    Let $X = \big(\mathbb C^n \setminus \{0\}\big)/\langle d_\lambda\rangle$ be a primary Hopf manifold generated by a diagonal holomorphic contraction $d_\lambda$. Then $c_1^{BC}(X) > 0$.
\end{prop}
\begin{proof}
    Let us consider the meromorphic $(n,0)$-form on $\C^n\setminus\{0\}$ given by
    $$\sigma=\frac{1}{z_1\cdots z_n}dz_1\wedge\cdots\wedge dz_n.$$
    This is $d_\lambda$-invariant, hence it induces a meromorphic section of the canonical bundle $\K_X$ of $X$, which we still call $\sigma$.
    In other words, the canonical is the line bundle associated to the divisor generated by zeros and poles of $\sigma$. We write this as
    $$\K_X = [(\sigma)], \quad \text{where }\quad (\sigma)=-\sum_{i=1}^n V_i, \quad \text{with }\quad V_i=\{z_i=0\}/\langle d_\lambda\rangle. $$
    Then the Chern class of the canonical is 
    $$c_1(\K_X) = -\sum_{i=1}^n\eta_{V_i}, $$
    where $\eta_{V_i}$ is the Poincaré dual of the fundamental class associated to the analytic subvariety $V_i\subset X$, see e.g. \cite[Proposition pg. 141]{GriHar}.
    Therefore, we consider a Gauduchon metric $\omega$ and integrating against it we get
    \begin{equation*}
        \int_X c_1^{BC}(X)\,\omega^{n-1} =\sum_{i=1}^n\int_{V_i}\omega^{n-1} > 0. 
    \end{equation*}
    This proves that $c_1^{BC}(X)$ cannot be zero, otherwise the integral would vanish by Stokes' theorem since $\p\bar\p\omega^{n-1}=0$.
    Therefore, it is a nonzero element in $H^{1, 1}_{BC}$, which, by \cite{io25}, is one-dimensional for any Hopf manifold and generated by $[dJ\theta]_{BC}$ ($\theta$ being the Lee form of an lcK metric). 
    In particular, we can choose $\theta$ to be the Lee form of a Vaisman metric on $X$. Then, it is known that $-dJ\theta$ is a transversal K\"ahler form, hence it is semipositive.
    Using again the positivity of the above integral, it follows that $a$ must be negative and $c_1^{BC}(X)>0$.
\end{proof}

We can now extend this positivity to any Hopf manifold by deformations. In this case, even if we cannot guaranty the existence of a semi-positive form, we can still guaranty the integral of $c_1^{BC}$ to stay positive. Therefore, for a general Hopf manifold, $c_1^{BC}$ will be positive in the sense of currents.

\begin{prop}\label{prop: deformation}
    Let $X = \big(\mathbb C^n \setminus \{0\}\big)/\langle \gamma\rangle$ be a primary Hopf manifold generated by a holomorphic contraction $\gamma$. Then $c_1^{BC}(X) = [adJ\theta]_{BC}$ with $a<0$. In particular, $c_1^{BC}(X)\neq0$, and contains a positive $(1,1)$-current.
\end{prop}    
\begin{proof}
    It is known that $\gamma$ can be assumed to be a $\lambda$-resonant biholomorphic polynomial map whose derivative $\gamma'(0)$ at zero has eigenvalues
    $$\lambda = (\lambda_1,\ldots,\lambda_n) \in \C^n, \quad \text{with }\quad 0\leq |\lambda_1| \leq \cdots \leq |\lambda_n| < 1.$$
    Let's call $d_\lambda$ the diagonal contraction given by the diagonal part of $\gamma$.
    Then it is known, see e.g. \cite[pg. 242]{Hae}, that there exits a holomorphic family of Hopf manifolds $\pi: \X \rightarrow \C$ such that $\X_0:=\pi^{-1}(0) = X_{d_\lambda}$ and for any $t\in\C^*$ we have $\X_t:=\pi^{-1}(t)\cong X$, where $X_{d_\lambda}$ is the diagonal Hopf manifold obtained as $\big(\mathbb C^n \setminus \{0\}\big)/\langle d_\lambda\rangle$.
    Being a holomorphic family, it comes equipped with a family of Hermitian metrics $\omega_t$ on each $\X_t$ varying smoothly on $t\in\C$.
    Moreover, the Gauduchon representative in any Hermitian class is the solution of an elliptic problem which depends smoothly on the initial data. 
    Therefore, we can suppose the $\omega_t$ to be Gauduchon.
    Now we just look at
    $$S(t)=\int_{\X_t}c_1^{BC}(\X_t)\,\omega_t^{n-1} = \int_{\X_t} s^{Ch}(\omega_t)\,\omega_t^n,$$
    where $s^{Ch}$ is the Chern scalar curvature, and the integrals are actually done all on the same smooth manifold since the $\X_t$ are all diffeomorphic.
    The function $S$ is clearly varying smoothly on $t\in\C$ and it is positive for $t=0$ thanks to Proposition \ref{prop: c1 diag Hopf}.
    Therefore, for $|t|$ small enough
    $$\int_{\X_t}c_1^{BC}(\X_t)\,\omega_t^{n-1} > 0,$$
    and hence, since $\omega_t$ is Gauduchon, $c_1^{BC}(X)\cong c_1^{BC}(\X_t)\neq0$.

    We now argue as in the proof of Proposition \ref{prop: c1 diag Hopf}, using \cite{io25} to guarantee that $c_1^{BC}(X)=[adJ\theta]_{BC}$ for some $a \neq 0$.
    It was proved in \cite{ov25} that $-dJ\theta$ is Bott--Chern cohomologous to a positive current. Therefore, $a$ has to be negative and $c_1^{BC}(X)$ contains a positive current.
\end{proof}

\subsection{Blow-ups and modifications}\label{sec: blow-up}
Here we study the first Bott-Chern class of smooth modification of complex manifolds, where we recall that a modification of a complex manifold $X$ is a birational morphism $\pi:Y\rightarrow X$ of complex manifolds which is not an isomorphism.
Thanks to \cite{ovv}, the blow-up of an lcK manifold along a complex submanifold also admits an LCK structure when this submanifold is of {\em induced globally conformally K\"ahler type}. 
Moreover, there also are some more general modifications of lcK manifolds which are not blow-ups and admit lcK structures. We provide here an example.

\begin{ex*}
    Suppose that $\pi:\hat\B\rightarrow\B$ is a K\"ahler modification in $0$ of the unit ball $\B\subset\C^n$. That is a modification of $\B$ such that $\hat\B$ admits a K\"ahler metric $\omega_K$. 
    Now, for any lcK manifold $(X,\omega)$, we indicate with the same symbol $\pi$ the associated modification $\pi:\hat X\rightarrow X$ of $X$ in $p\in X$ obtained by embedding $(\B,0)\hookrightarrow(X,p)$.
    We want to show that $\hat X$ is lcK.
    Since $\omega$ is lcK it satisfies $d\omega=\theta\wedge\omega$, and (up to a conformal change) we can take $\theta_{|_\B}=0$.
    Suppose now that we have a closed $(1,1)$-form $\Omega$ on $\hat\B\hookrightarrow\hat X$ which is K\"ahler in a neighborhood of the exceptional locus and whose support is strictly contained in $\hat\B$.
    Then, we can construct an lcK metric on $\hat X$, $\hat\omega=\pi^*(\omega)+\varepsilon\,\Omega$ for $\varepsilon>0$ small enough. 
    Indeed,
    $$d\hat\omega = d\pi^*(\omega) = \pi^*(\theta)\wedge\pi^*(\omega)= \pi^*(\theta)\wedge\hat\omega, $$
    where the last equality follows from the fact that $\pi^*(\theta)\wedge\Omega=0$ since $\Omega$ is supported in $\hat\B$ and $\pi^*(\theta)$ outside.
    The existence of $\Omega$ with the required properties follows by adapting the argument of \cite[Lemma 10.2]{iopr}. Indeed, we need to consider a function $\rho=\rho(\|z\|^2)$ on $\B$ which is strictly plurisubharmonic on $\B_{r_2}$ and zero outside $\B_{r_3}$, for $\B_r$ the ball in $0$ of radious $r$, and $0<r_1<r_2<r_3<1$. If we denote by $\hat\B_r$ the $\pi$-modification of $\B_r$, then there exists $\Omega$ a closed $(1,1)$-form on $\hat\B$ such that
    $$\Omega_{|_{\B_{r_1}}}=\omega_K, \quad \text{and }\quad \Omega_{|_{\B\setminus \B_{r_2}}}=\lambda\pi^*dd^c\rho,$$
    for some constant $\lambda$.
    In particular, $\Omega=0$ outside $\B_{r_3}$, and hence it can be extended to the whole $\hat X$. 
    \hfill$\blacksquare$
\end{ex*}

\begin{rmk}
    The question of whether in general a modification of an lcK manifold admits an lcK structure is very wild. Indeed, following the same argument used in \cite[Theorem 2.8]{ovv} for blow-ups, the problem reduced to understand whether a modification of a K\"ahler manifold is still K\"ahler. This is clearly true for blow-ups, but is not in general and deeply depends on the specific modification.
\end{rmk}

\medskip

Given a modification $\pi:X\rightarrow Y$, the exceptional locus of $\pi$ in $Y$ is a subvariety $E\subset Y$ of codimension one, which is contracted by $\pi$ to a subvariety $V\subset X$ of codimension at least two, see e.g \cite[Corollary 2.5]{shaf}. 
Since $\pi$ is a biholomorphism outside the exceptional loci, the line bundle $\K_Y - \pi^*\K_X $ is supported on $E$, and hence associated with a divisor $\sum_i a_iE_i$, for $E_i$ the irreducible components of $E$ and $a_i\in\R$.
That is,
\begin{equation}\label{eq: canonical modification}\tag{K$_{\rm mod}$}
    \K_Y = \pi^*\K_X + \sum_ia_i[E_i] ,\quad \text{for some } a_i\in\R.
\end{equation}
In general, the coefficients $a_i$ depend on the modification. However, since $X$ and $Y$ are smooth one can argue that they are positive, looking at $\pi$ as a resolution of singularities. A more direct way to see this is by pulling back a local volume form on $X$ around $p\in V$, which will vanish on $E$ but not have poles. 
It follows that the first Chern classes are related by
\begin{equation*}\label{eq: Chern class modification}
    c_1(Y) = \pi^*\left(c_1(X)\right) -\sum_ia_i\,\eta_{E_i}, \quad\text{for some } a_i>0,
\end{equation*}
where $\eta_{E_i}$ is the Poincaré dual of $E_i$.
If $\pi$ is the blow-up of a submanifold $V\subset X$, the coefficients $a_i$ can be computed, giving
$$\K_Y = \pi^*\K_X + (n-v-1)[E], \quad \text{and }\quad c_1(Y) = \pi^*\left(c_1(X)\right) -(n-v-1)\,\eta_{E},$$
where $v$ is the dimension of $V$, and $E$ is the exceptional divisor, see e.g. \cite[pg. 608]{GriHar}.
With this in our hands, we are ready to prove that the first Chern class does not vanish.

\begin{prop}\label{prop: blow-up}
    Let $X$ be a complex manifold, and consider its blow-up $\pi:Y\rightarrow X$ along a complex submanifold $V\subset X$ of codimension $d>1$. Then $c_1(Y)\neq0$. In particular, $c_1^{BC}(Y)\neq0$.
\end{prop}
\begin{proof}
    It is known that the exceptional divisor $E=\pi^{-1}(V)$ is a fiber bundle over $V$ with fiber $\P_\C^{d-1}$. Hence, given a point $p\in V$, we have $F:=\pi^{-1}(p)\cong \P_\C^{d-1}$.
    Moreover, the restriction of the Chern class to $F$ is
    $$c_1(Y)_{|_F} = (\pi^*(c_1(X))_{|_F} - (d-1) (\eta_E)_{|_F} = -(d-1)\, c_1\big(\mathcal{O}_{\P_\C^{d-1}}(-1)\big)_{|_F} \neq0. $$
    Therefore, $c_1(Y)$ is not zero.
\end{proof}

For modifications in general, we have to look at the first Chern class in the Bott--Chern cohomology since it is more precise.

\begin{prop}\label{prop: modifications}
    Let $X$ be a complex manifold, and consider a modification $\pi:Y\rightarrow X$. Then $c_1^{BC}(Y)\neq0$.
\end{prop}
\begin{proof}
    
    Let us first assume that $c_1^{BC}(X)\neq0$. Then for any Gauduchon metric $\omega$ on $X$, the integral of $c_1^{BC}(X)$ against $\omega^{n-1}$ is not zero.
    We thus consider $\tilde\omega:=\pi^*\omega$ and compute
    \begin{align*}
        \int_Y c_1^{BC}(Y)\wedge\tilde\omega^{n-1} &= \int_Y \big[\pi^*\left(c_1^{BC}(X)\right) -\sum_ia_i\,\eta_{E_i}\big]\wedge\tilde\omega^{n-1} \\
            &= \int_Y \pi^*\left(c_1^{BC}(X)\wedge\omega^{n-1} \right) - \sum_i a_i\,\int_{E_i} (\pi^*\omega)^{n-1}\\
            &= \int_X c_1^{BC}(X)\wedge\omega^{n-1} \neq 0.
    \end{align*}
    Here we used the fact that the second integral on the second line is vanishing since $\pi$ contracts $E$ into something of codimension greater than one.
    Since $\tilde\omega^{n-1}$ is a $\p\bar\p$-closed $(n-1,n-1)$-form on $Y$, $c_1^{BC}(Y)$ has to be different from zero in Bott--Chern cohomology.

    Suppose now that $c_1^{BC}(X)=0$, and consider any Gauduchon metric $\check\omega$ on $Y$. It holds
    \begin{align*}
        \int_Y c_1^{BC}(Y)\wedge\check\omega^{n-1} &= \int_Y \big[\pi^*\left(c_1^{BC}(X)\right) -\sum_ia_i\,\eta_{E_i}\big]\wedge\check\omega^{n-1} \\
            &=  - \sum_i a_i \int_{E_i} \check\omega^{n-1} < 0.
    \end{align*}
    Therefore, we conclude again that $c_1^{BC}(Y)$ has to be different from zero in Bott--Chern cohomology.
\end{proof}


\subsection{Kato manifolds}\label{sec: Kato}
In \cite{kat77} Kato introduced a construction method for compact complex manifolds of non-K\"ahler type containing a {\em global spherical shell}, known as Kato manifolds.
Let $\pi: \hat{\B} \rightarrow \B$ be a modification of the standard open unit ball $\B\subset\C^n$ at finitely many points and let $\sigma: \overline{\B} \hookrightarrow \hat{\B}$ be a holomorphic embedding.
Glue small neighborhoods of the two boundary components of $\hat{\B} \setminus \sigma(\B)$ via the local biholomorphism $\sigma \circ \pi$. The resulting manifold, indicated as $X(\pi,\sigma)$, is the wanted compact complex manifold, and the couple $(\pi, \sigma)$ is referred to as a {\it Kato data}. 
Notice that if $\sigma(0)\notin E$ for $E$ the exceptional locus of $\pi$, then $X(\pi,\sigma)$ is just a modification of a Hopf manifold. In addition, Kato manifolds admit small deformations in modification of Hopf manifolds, where the modification of these is still, in some way, $\pi$, \cite{kat77}. More precisely, this modification of the Hopf manifold is obtained removing an open set isomorphic to $\B$ and gluing $\hat\B$ instead. In the following we will use the same symbol $\pi$ for both the above modifications as they are essentially the same.

The existence of lcK metrics on Kato manifolds was characterized in \cite{iopr} in terms of the modification $\pi$. Namely, the Kato manifold $X(\pi,\sigma)$ admits an lcK metric if and only if $\hat{\B}$ admits a K\"ahler metric. This condition is of course verified when $\pi$ is the blow-up of the ball, but there are also other less trivial K\"ahler modifications of $\B$. In any case, we prove that the first Chern class of Kato manifolds is non-vanishing independently on whether they admit an lcK metric or not. For Kato surfaces, one could argue they are not Calabi--Yau by \cite[Theorem 1.4]{tos}, since their canonical bundle is not torsion.

\begin{prop}\label{p: Kato}
    Let $X(\pi,\sigma)$ be a Kato manifold, then $c_1(X)\neq0$. In particular, $c_1^{BC}(X)\neq0$. 
\end{prop}
\begin{proof}
    First of all, let's consider the deformation of $X$ into a modified Hopf manifold $\hat H$, and we keep calling $\pi$ the modification $\pi:\hat H\rightarrow H$. 
    We call $E$ the exceptional locus of $\pi$, with irreducible components $E_1,\dots,E_s$.
    Since the first Chern class is constant in de Rham cohomology under small deformations, it suffices to prove that $c_1(\hat H)\neq0$. 
    Being a modification of a K\"ahler manifold, $\hat\B$ always admits a balanced metric $\omega$ \cite{ab91}.
    Now, suppose that $c_1(\hat H)=0$. Then, it has to be zero also when restricted to $\hat\B \hookrightarrow \hat H$.
    Hence, since $c_1(H)$ vanishes on the contractible set $\B$, we have
    $$0=c_1(\hat H)_{|_{\hat\B}} = \big( \pi^*(c_1(H) - \sum_ia_i\,\eta_{E_i} \big)_{|_{\hat\B}} = \pi^*\left( c_1(H)_{|_{\B}} \right) -\big(\sum_ia_i\,\eta_{E_i} \big)_{|_{\hat\B}} = -\big(\sum_ia_i\,\eta_{E_i} \big)_{|_{\hat\B}} .$$
    However, this is not possible. Indeed, the Poincaré classes $\eta_{E_i}$ are supported in neighborhoods of the $E_i$ as small as we need. Therefore, integrating on $\hat\B$ we have
    $$\int_{\hat\B} \big(\sum_ia_i\,\eta_{E_i}\big)\wedge\omega^{n-1} = \sum_ia_i\int_{E_i}\omega^{n-1} > 0 , $$
    giving the contradiction we needed to conclude the proof.
\end{proof}

A distinguished subclass of Kato manifolds that is related to toric geometry was introduced in \cite{iopr}. These are known as toric Kato manifolds and are characterized by having as construction data a toric modification of $\mathbb{C}^n$ at 0, $\pi: \hat{\mathbb{C}}^n \rightarrow \mathbb{C}^n$, and an equivariant embedding of the ball $\sigma: \mathbb{B} \rightarrow \hat{\mathbb{C}}^n$.
For these manifolds we can further describe the first Bott--Chern class proving that it contains a positive $(1,1)$-current.
To do it, we use the fact that part of the geometry of toric Kato manifolds is expressed in terms of an associated matrix $A \in GL(n, \mathbb{Z})$.

\begin{prop}\label{p: toric Kato}
    Let $X$ be a toric Kato manifold, then $c_1^{BC}(X)$ contains a positive $(1,1)$-current.
\end{prop}

\begin{proof}
    By \cite[Prop. 5.7]{iopr}, the canonical bundle $K_X$ is given by 
    \begin{equation*}
        K_X \otimes L_{\mathrm{det}\, A}\simeq \mathcal{O}_X(-D_T),
    \end{equation*}
    where $D_T$ is the toric divisor associated to $X$ and $L_{\mathrm{det} A}$ is the flat line bundle associated to the representation $\rho: \mathbb{Z} \rightarrow \mathbb{C}^*$, $\rho(1)=(-1)^{\mathrm{det}\, A}$. This gives furthermore $K_X^{-1}\simeq \mathcal O(D_T)\otimes L_{\mathrm{det}\, A}$. Since $L_{\mathrm{det}\, A}$ is flat and $\mathrm{Im} \rho \subset S^1$, $c_1^{BC}(L_{\mathrm{det} A})=0$ and
    \begin{equation*}
        c^{BC}_1(X)=c^{BC}_1(K_X^{-1})=c^{BC}_1(\mathcal O(D_T))=[D_T]_{BC},
    \end{equation*}
    where $[D_T]_{BC}$ is the integration current, a closed positive $(1,1)$--current. 
    
\end{proof}

\subsection{lcK with potential}

A result by \cite{ov10} states that the class of lcK with potential manifolds is closed under small deformations. It follows that deformations of a Vaisman manifold always admit lcK metrics with potential, and so we study whether they are Calabi--Yau or not.
We use also in this case the continuity property of the first Chern class together with a deformation result for lcK manifolds with potential. As a matter of fact, it was proved in \cite{ov10b} that for any lcK manifold with potential of complex dimension at least $3$, there exists a small deformation of which admits a Vaisman
metric.

\begin{prop}\label{p: with potential}
    Let $X$ be a compact lcK manifold with potential with $\dim_\C X\geq3$. Then, if $c_1^{BC}(X)=0$ it is Vaisman.
    In other words, compact lcK with potential non-Vaisman manifold with $\dim_\C X\geq3$ have $c_1^{BC}(X)\neq0$.
\end{prop}
\begin{proof}
    The proof of \cite[Theorem 2.1]{ov10b} shows that $X$ comes from a holomorphic family $\pi:\X\rightarrow\C$ such that $\X_0:=\pi^{-1}(0)$ is Vaisman, and the generic fiber $\X_t:=\pi^{-1}(t)$ for $t\in\C^*$ is isomorphic to $X$.
    Indeed, $X$ can be embedded into a Hopf manifold $H$, and the authors proved that, as a smooth manifold, it can be embedded into a holomorphic family of Hopf manifolds whose central fiber is diagonal (hence Vaisman) and general fiber is isomorphic to $H$.
    Then, if $c_1^{BC}(X)=0$, also $c_1(X)=0$, and this is constant under deformations, hence $c_1(\X_0)=0$. 
    Furthermore, by the same continuity argument of the proof of Proposition \ref{prop: deformation} we get
    $$\int_{\X_0}c_1^{BC}(\X_0)\,\omega_0^{n-1} = 0,$$
    for some Gauduchon metric $\omega_0$ on $\X_0$.
    However, for Vaisman manifolds with vanishing first Chern class, the first Bott--Chern class has a well defined sign, see e.g. \cite{i}, meaning that it has to be either positive, negative, or zero. Thus $c_1^{BC}(\X_0)= 0$.
    Now, thanks to \cite[Theorem E]{i}, any small deformation of a compact Vaisman manifold with vanishing first Bott--Chern class has to be Vaisman.
\end{proof}

The deformation result \cite[Theorem 2.1]{ov10b} depends on the possibility of embed an lcK manifold with potential into a Hopf manifold.
At the present state of art, this can be proved for complex surfaces only using Kodaira's classification and admitting that the {\em Global Shperical Shell} (GSS) conjecture holds true. However, in this case we could give a direct argument as follows.
We first observe that thanks to Proposition \ref{prop: blow-up}, it is enough to look at the first Bott--Chern class of minimal surfaces.
If we suppose that the GSS conjecture is true, then the only minimal class VII surfaces admitting an lcK metric with potential are Hopf surfaces, which we studied in Section \ref{sec: Hopf}. 
On the other hand, the only non-K\"ahler minimal surfaces which are not of class VII are {\em non-K\"ahler elliptic surfaces}, and these are Vaisman, see e.g. \cite{ov10b}. 
We thus state the following result based on the veridicity of the GSS conjecture.

\begin{prop}\label{p: surf with potential}
    Provided that the GSS conjecture holds true, compact lcK with potential non-Vaisman surfaces have $c_1^{BC}(X)\neq0$.
\end{prop}

\section{Calabi--Yau locally conformally K\" ahler manifolds}\label{s: main}

We focus in this section on locally conformally K\" ahler manifolds that are Calabi--Yau and find sufficient conditions to ensure they actually are Vaisman.
We first note that the Calabi--Yau class is disjoint from the Einstein--Weyl class, defined as locally conformally K\" ahler manifolds for which the universal cover $(\widetilde{M}, J, \tilde{g})$ is Ricci flat. 
This latter family has been shown to be Vaisman in \cite{gau} and could be regarded as a local metric analogue of the Calabi--Yau notion. 
Both these classes have vanishing first Chern class in $H^2_{dR}(X, \mathbb{R})$, but are nevertheless complementary, when we pass to the finer Bott--Chern cohomology. Indeed, if $(M, J, g)$ is a locally conformally K\" ahler manifold with Chern--Ricci flat metric $g$, then the universal cover $(\widetilde{M}, J, \tilde{g})$ satisfies $\mathrm{Ric}_{\tilde{g}}=\pi^*dJ\theta$, but $dJ\theta$ cannot be $dd^c$-exact, hence not vanishing in the Bott--Chern cohomology. 

The first example of Vaisman Calabi--Yau manifold is given in complex dimension 2 by the primary Kodaira surface, that has holomorphically trivial canonical bundle. Moreover, this surface is endowed with a holomorphic symplectic form, which is, in general, one of the instances when a complex manifold has holomorphically trivial canonical bundle. However, in higher dimensions, a Vaisman Calabi--Yau manifold cannot admit such structures, thanks to \cite{ao23}.  

Inspired by Gauduchon's result in the Einstein--Weyl setting, we prove the following result, which shows that, under certain conditions on the Chern--Ricci form, an lcK metric is Vaisman.  

\begin{theo*}[Theorem \ref{thm A}]
   Let $(M, J)$ be a compact manifold admitting a locally conformally K\" ahler metric $\omega$ such that $Ric(\omega)=tdJ\theta$, where $\theta$ is the Lee form of $\omega$ and $t \leq 0$. If $\omega$ is Gauduchon, then $\omega$ is Vaisman. In particular, an lcK metric which is Gauduchon and Chern--Ricci flat is Vaisman.
\end{theo*}

\begin{proof} 
    The proof rests heavily on the fact that we have an explicit formula for $Ric^{W}$ and implicitly for $Ric^{LC}$. Indeed, by \cite[Theorem 1.159]{besse}, the relation between $Ric^{W}$ and $Ric^{LC}$ is the following:
       \begin{equation}\label{eq: Ric W-LC}
            Ric^W = Ric^{LC} +\frac{1}{2}\left(-d^*\theta +(1-n)\|\theta\|^2\right)g + (n-1)\Na\theta  + \frac{1}{2}(n-1)\theta\otimes\theta,
        \end{equation}
    as a straightforward consequence of $\nabla^{W}$ being locally the Levi-Civita connection of the conformal K\" ahler metric. 
    Moreover, since $Ric(\omega)=tdJ\theta$, by \eqref{Weyl-form}, we get $Ric^{W, J}=\left(\frac{n}{2}+t\right)dJ\theta$ and therefore, $Ric^{W}=\left(\frac{n}{2}+t\right)dJ\theta(\cdot, J\cdot)$.  
   Now, by taking the Lie derivative $\mathcal{L}_{\theta^{\sharp}}$ in the equation $\omega=g(J \cdot, \cdot)$, one can easily prove that
    \begin{equation}\label{dJtheta}
        dJ\theta (\cdot, J \cdot)= 2 (\nabla \theta)^{1, 1} - \|\theta\|^2g + \theta\otimes \theta + J\theta \otimes J\theta,
    \end{equation}
    where $(\nabla \theta)^{1, 1}:=\frac{1}{2}\left(\nabla \theta+ \nabla \theta (J\cdot, J\cdot)\right)$ is the $J$-invariant part of $\nabla \theta$.
    Therefore, combining with \eqref{eq: Ric W-LC} we get 
    \begin{equation}\label{Ricci}
        Ric^{LC}=(n+2t) (\nabla\theta)^{1, 1}-\left(\left(t+\frac{1}{2}\right)\|\theta\|^2-\frac{d^*\theta}{2}\right)g-(n-1)\nabla \theta+\frac{2t+1}{2}\theta \otimes \theta+\left(\frac{n}{2}+t\right) J\theta \otimes J\theta.
    \end{equation}


    
    We now consider the Bianchi identity, which can be written as 
    \begin{equation*}\label{Bianchi}
        \delta Ric^{LC}+\frac{1}{2}ds^{LC}=0,
    \end{equation*}
    where $\delta$ is the $L^2$-adjoint of the operator symmetrizing $\nabla$,  $\nabla^{sym}: \Omega^1(M) \rightarrow Sym^2$.
    By taking the $L^2$ product with $\theta$ and taking into account that $d^*\theta=0$, we obtain:
    $$\int_M \left< Ric^{LC},\nabla\theta \right>_g \mathrm{vol}(\omega)=0$$
    Hence, substituting $Ric^{LC}$ with \eqref{Ricci} and using that $\left< g,\nabla\theta \right>_g= -d^*\theta=0$ we get:

    \begin{equation*}
        \int_M\left((n+2t)||(\nabla \theta)^{1,1}||^2-(n-1)||\nabla \theta||^{2}+\left(\frac{n}{2}+t\right) \nabla \theta (J\theta^{\sharp}, J\theta^{\sharp})+\frac{2t+1}{2}\nabla \theta (\theta^{\sharp}, \theta^{\sharp})\right) \mathrm{vol}(\omega)=0.
    \end{equation*}
However, $\nabla \theta (\theta^{\sharp}, \theta^{\sharp})=\frac{1}{2}\theta^{\sharp}(\|\theta\|^2)$, as a consequence of $\nabla_{\theta^{\sharp}}g(\theta^{\sharp}, \theta^{\sharp})=0$, whence 
$$\int_{M}\nabla \theta (\theta^{\sharp}, \theta^{\sharp})\,\mathrm{vol}(\omega)=\frac{1}{2}\int_M\|\theta\|^2d^*\theta\,\mathrm{vol}(\omega)=0.$$
Therefore, we are left with:
\begin{equation}\label{simplified}
    \int_M\left((n+2t)||(\nabla \theta)^{1,1}||^2-(n-1)||\nabla \theta||^{2}+\left(\frac{n}{2}+t\right) \nabla \theta (J\theta^{\sharp}, J\theta^{\sharp})\right) \mathrm{vol}(\omega)=0.
\end{equation}

We shall now address the term $\nabla\theta(J\theta^\sharp,J\theta^\sharp)$. To do so we use the Weitzenb\"ock formula for one-forms $\eta$, that reads as
    \begin{equation*}
        \Delta \eta = \Na^*\Na\eta+ \iota_{\eta^\sharp}Ric^{LC},
    \end{equation*}
    where $\Delta$ is the Laplacian $dd^*+d^*d$, and $\Na^*$ is the $L^2$-dual of $\Na$, see e.g. \cite[Corollary 21]{peter}. 
    Since $\omega$ is Gauduchon and lcK, $\Delta\theta=0$. Hence, specializing this formula for the Lee form $\theta$ and integrating against it we obtain
    \begin{equation}\label{bochner}
        0 = \int_M\|\Na\theta\|^2 \mathrm{vol}(\omega) + \int_M Ric^{LC}(\theta^\sharp,\theta^\sharp) \mathrm{vol}(\omega).
    \end{equation}
However, by \eqref{Ricci}, we get 
\begin{equation*}
    Ric^{LC}(\theta^{\sharp}, \theta^{\sharp})=\left(\frac{n}{2}+t \right)(\nabla \theta(\theta^{\sharp}, \theta^{\sharp})+\nabla \theta(J\theta^{\sharp}, J\theta^{\sharp}))-(n-1)\nabla \theta (\theta^{\sharp}, \theta^{\sharp}),
\end{equation*}
and replacing it into \eqref{bochner}, we have 
\begin{equation}\label{Jtheta}
    -\int_M||\nabla \theta||^2 \mathrm{vol}(\omega)=\left(\frac{n}{2}+t \right)\int_M\nabla \theta(J\theta^{\sharp}, J\theta^{\sharp})\mathrm{vol}(\omega)
\end{equation}
Finally, combined with \eqref{simplified}, we obtain
\begin{equation}\label{eqt}
    \int_M\left((n+2t)||(\nabla \theta)^{1,1}||^2-n||\nabla \theta||^{2}\right) \mathrm{vol}(\omega)=0.
\end{equation}
Since $t \leq 0$, $(n+2t)||(\nabla \theta)^{1,1}||^2-n||\nabla \theta||^{2} \leq 0$. We distinguish two cases. If $t<0$, then equality can be attained if and only if $\nabla \theta=0$, which means $\omega$ is Vaisman. If $t=0$, equality is attained if and only if $\nabla\theta=(\nabla \theta)^{1,1}$. This further implies $\nabla \theta (J\theta^{\sharp}, J\theta^{\sharp})=\nabla\theta(\theta^{\sharp}, \theta^{\sharp})$, therefore by \eqref{Jtheta}, $\nabla\theta=0$ again. So $\omega$ is Vaisman in both cases.
\end{proof}

\begin{rmk}
    The Gauduchon condition in Theorem \ref{thm A} cannot be dropped. As a matter of fact, one can construct lcK metrics that are Chern--Ricci flat and are not Gauduchon as we do in the following.
    However, we shall remark that the exmples we provide are either on Vaisman manifolds or on K\"ahler manifolds, in agreement with our conjecture. 
\end{rmk}

\begin{ex*}
    We use that a Vaisman metric on a compact manifold is prescribed by the cohomology class of its Lee form and by its volume form, as stated in \cite{ov24}.
    Thus, take $(M, J)$ a compact Vaisman Calabi--Yau manifold and fix a Lee class $\tau \in H^1_{dR}(M)$. By \cite{i} there exists a unique Vaisman metric $g$ with Lee class $\tau$ that is Chern--Ricci flat. Consider now a smooth function $f$ on $M$ that is invariant with respect to the Lee group generated by $\theta^{\sharp}$ and $J\theta^{\sharp}$. By \cite{ov24}, there exists a unique Vaisman metric $g_1$ with Lee class $\tau$ and volume form $\mathrm{vol}_{g_1}=f\mathrm{vol}_g$. Then $Ric(g_1)=-\mathrm{i}\partial\overline{\partial}f$ and $Ric(e^{-\frac{f}{n}}g_1)=0$. Since $g_1$ is Vaisman and $f$ is not constant, $e^{-\frac{f}{n}}g_1$ is a non-Gauduchon lcK metric, that is Chern--Ricci flat. 
    \hfill$\blacksquare$
\end{ex*}
\begin{ex*}
    Starting from a K\"ahler Calabi--Yau metric $\omega$, we consider a different element in the K\"ahler class $\omega_\varphi := \omega +\im\p\bar\p\varphi$. This will satisfy $Ric(\omega_\varphi)=\im\p\bar\p f$ for some smooth function $f$. Then $e^{\frac{f}{n}}\omega_\varphi$ is globally conformally K\"ahler and Chern--Ricci flat.
    \hfill$\blacksquare$    
\end{ex*}

As a direct consequence of \eqref{eqt}, we can extrapolate from the above proof the following result.
\begin{cor}
    Let $(M, J)$ be a compact manifold admitting a locally conformally metric $\omega$ such that $Ric(\omega)=tdJ\theta$, for $t \in \mathbb{R}$. If $(M, J)$ is not Vaisman, then $t=\frac{n}{2}\left(\frac{||\nabla \theta||^2}{||\nabla \theta^{1, 1}||^2}-1\right)>0$.
\end{cor}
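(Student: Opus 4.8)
The plan is to extract both the value and the sign of $t$ from the single integral identity \eqref{eqt} produced in the proof of Theorem \ref{main}; no new differential-geometric input is needed beyond that identity and the Gauduchon hypothesis under which it was derived, which I keep in force. Recall that \eqref{eqt} states $\int_M\big((n+2t)\|(\nabla\theta)^{1,1}\|^2-n\|\nabla\theta\|^2\big)\,\mathrm{vol}(\omega)=0$. Since $n+2t$ and $n$ are constants they pass outside the integrals, so the identity is equivalent to $(n+2t)\,\|(\nabla\theta)^{1,1}\|^2=n\,\|\nabla\theta\|^2$, where from now on $\|\cdot\|$ denotes the global $L^2$-norm. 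If $(M,J)$ is not Vaisman then $\nabla\theta\not\equiv0$; moreover $(\nabla\theta)^{1,1}\not\equiv0$, since otherwise the last identity would give $\|\nabla\theta\|^2=0$ and hence $\nabla\theta\equiv0$. The denominator being nonzero, I solve for $t$ and obtain $t=\tfrac{n}{2}\big(\|\nabla\theta\|^2/\|(\nabla\theta)^{1,1}\|^2-1\big)$, which is the asserted formula.

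It remains to pin down the sign. Because $\theta$ is closed, $\nabla\theta$ is a symmetric $2$-tensor, and the map $S\mapsto S(J\cdot,J\cdot)$ is a self-adjoint involution on symmetric $2$-tensors, as $J$ is a $g$-isometry with $J^2=-\mathrm{id}$. Its $\pm1$-eigenspaces are therefore $g$-orthogonal, so the splitting $\nabla\theta=(\nabla\theta)^{1,1}+(\nabla\theta)^-$ into its $J$-invariant part and its $J$-anti-invariant part $(\nabla\theta)^-$ is pointwise orthogonal. Integrating the resulting Pythagorean identity gives $\|\nabla\theta\|^2=\|(\nabla\theta)^{1,1}\|^2+\|(\nabla\theta)^-\|^2\ge\|(\nabla\theta)^{1,1}\|^2$, whence the ratio in the formula is $\ge1$ and $t\ge0$.

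The one point that needs care, and which I expect to be the main obstacle, is upgrading $t\ge0$ to $t>0$. Equality would force $(\nabla\theta)^-\equiv0$, i.e.\ $\nabla\theta$ would be $J$-invariant; but this is exactly the borderline situation of the $t=0$ case of Theorem \ref{main}, which concludes that $\omega$ is Vaisman, contradicting the standing hypothesis. Rather than re-run that computation I would simply invoke Theorem \ref{main} with $t=0\le0$ as a black box to exclude $t=0$. Hence the ratio is strictly larger than $1$ and $t>0$, completing the proof.
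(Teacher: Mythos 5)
Your proposal is correct and is essentially the paper's own argument: the paper's entire proof is ``this is a direct consequence of \eqref{eqt}'', and your derivation --- solving the integral identity for $t$ (after checking the denominator $\|(\nabla\theta)^{1,1}\|^2$ cannot vanish in the non-Vaisman case), obtaining $t\ge 0$ from the pointwise orthogonality of the $J$-invariant/anti-invariant splitting of the symmetric tensor $\nabla\theta$, and excluding $t=0$ by invoking Theorem \ref{main} --- is exactly the intended filling-in of that one line. Your observation that the Gauduchon hypothesis must be kept in force is also apt, since \eqref{eqt} is only derived under it even though the corollary's statement omits it.
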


Finally, we notice that the Oeljeklaus--Toma manifolds (see Section \ref{s: OT}) provide examples of compact manifolds admitting an lcK metric $\omega$ such that $Ric(\omega)=tdJ\theta$, for $t>0$, which is not Vaisman.

\section{Calabi--Yau locally conformally Kähler solvmanifolds}\label{sec: solv}
The simplest examples of K\"ahler Calabi--Yau manifolds are complex tori.
Analogously, the simplest examples of compact non-K\"ahler Vaisman manifolds which are Calabi--Yau are given by principal toric bundles over them. 
These are the {\em Kodaira manifolds} defined in \cite{gmpp}.
They are the high-dimensional analogous of {\em Kodaira surfaces} and have a nilmanifold structure given by $\mathbb{Z} \times H_{\mathbb{Z}} \backslash \mathbb{R} \times H_{2n+1}$. Here $H_{2n+1}$ denotes the Heisenberg group
\[
H_{2n+1} = \left\{ 
\begin{pmatrix}
1 & x & z \\
0 & I_n & y^\top \\
0 & 0 & 1
\end{pmatrix} 
: x, y \in \mathbb{R}^n, \, z \in \mathbb{R} 
\right\}
\]
and $H_{\mathbb{Z}}$ is $\mathbb{Z}^{2n+1}$ seen as a subgroup of $H_{2n+1}$.
We shall prove that, up to finite cover, these are all the solvmanifolds with left-invariant Vaisman structure.
To do so, one builds on the characterization by Andrada--Origlia of unimodular Vaisman Lie algebras as double extensions of a K\"ahler flat Lie algebras.

\begin{theo}[Theorem 3.9 in \cite{ao}]\label{thm: AO}
    Let $\mathfrak{g}$ be a unimodular solvable Lie algebra endowed with a Vaisman metric $\omega$ with Lee form $\theta$. Then $\mathfrak{g}=\mathbb{R}\theta \ltimes_{D} (\mathbb{R}J\theta \oplus \mathfrak{l})$, where $\mathfrak{l}$ is a K\" ahler Lie algebra, $D=ad_{\theta}|_{\mathrm{Ker}\, \theta}$ and the structure on $\mathbb{R}J\theta \oplus \mathfrak{l}$ is given by $$[x, y]=-dJ\theta(x, y)J\theta+[x, y]_{\mathfrak{l}}.$$ 
    Moreover, $-dJ\theta$ defines a flat K\" ahler metric on $\mathfrak{l}$.
\end{theo}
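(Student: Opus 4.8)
Throughout write $\xi=\theta^\sharp$ and $\zeta=J\theta^\sharp$, and normalise $g$ so that $|\theta|\equiv 1$ (permissible, since $|\theta|$ is constant for a Vaisman metric). As $\theta$ is parallel, the invariant fields $\xi,\zeta$ are Killing, real holomorphic and satisfy $[\xi,\zeta]=0$; in Lie-algebra terms this says that $\ad_\xi,\ad_\zeta$ are skew-symmetric for $g$, commute with $J$, and kill both $\xi$ and $\zeta$. I would first record the orthogonal, $J$-stable splitting $\g=\R\xi\oplus\R\zeta\oplus\l$ with $\l:=\{\xi,\zeta\}^{\perp}$: since $J$ preserves the plane $\langle\xi,\zeta\rangle$ it preserves $\l$, and $(g,J)|_\l$ makes $\l$ a Hermitian vector space.

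The elementary structural facts come next. Because $\theta$ is closed, $\theta([\g,\g])=0$, so $\ker\theta=\xi^{\perp}=\R\zeta\oplus\l$ is an ideal and $\g=\R\xi\ltimes_D\ker\theta$ with $D=\ad_\xi|_{\ker\theta}$; as $\ad_\xi$ is skew and annihilates $\xi,\zeta$, it preserves $\l$, so $D$ is a skew, $J$-commuting derivation with $D\zeta=0$. Using $d\alpha(x,y)=-\alpha([x,y])$ for invariant $1$-forms gives at once $g([x,y],\zeta)=-(J\theta)([x,y])=-dJ\theta(x,y)$ for all $x,y\in\ker\theta$, so the $\zeta$-component of every bracket in $\ker\theta$ is $-dJ\theta(x,y)\zeta$; moreover $-dJ\theta|_\l$ equals the Hermitian form of $\l$ (this is $|\theta|^2\omega=\theta\wedge J\theta-dJ\theta$ restricted to $\l$). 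Writing $[x,y]=-dJ\theta(x,y)\zeta+[x,y]_\l$ for $x,y\in\l$ then \emph{defines} the candidate bracket $[\,\cdot,\cdot\,]_\l:=\pi_\l[\,\cdot,\cdot\,]$.

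The heart of the proof is to show that $\zeta$ is central in $\ker\theta$, i.e. $A:=\ad_\zeta|_\l=0$; this is what upgrades the above data to a genuine double extension, with $\R\zeta$ a central ideal and $\l\cong\ker\theta/\R\zeta$ a Lie algebra for $[\,\cdot,\cdot\,]_\l$ with central cocycle $-dJ\theta|_\l$. Since $\zeta$ is Killing, $A$ is skew, so $\tr(A^2)\le 0$ with equality iff $A=0$; and because $\ad_\zeta$ kills $\xi,\zeta$ one has $B(\zeta,\zeta)=\tr_\g(\ad_\zeta^{\,2})=\tr_\l(A^2)$ for the Killing form $B$. As $\g$ is \emph{solvable}, Cartan's criterion gives $B([\g,\g],\g)=0$; hence $A=0$ will follow once one proves the key lemma $\zeta\in[\g,\g]$. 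Nondegeneracy of $-dJ\theta|_\l$ shows the $\zeta$-components of $[\l,\l]$ already span $\R\zeta$, i.e. the projection of $[\g,\g]$ onto $\R\zeta$ is onto; promoting this to $\zeta\in[\g,\g]$ itself, by absorbing the $\l$-parts via the $\ad_\xi$- and $\ad_\zeta$-stability of $\l$, is the step I expect to be the main obstacle. It is exactly here that solvability is indispensable: the non-solvable Vaisman Lie algebras $\R\xi\ltimes\mathfrak{su}(2)$ and $\R\xi\ltimes\mathfrak{sl}(2,\R)$ — with $\ad_\xi$ acting as a rotation on a transverse plane and $\zeta$ the Reeb direction — are unimodular and Vaisman yet have $\ad_\zeta|_\l\ne 0$, so no purely pointwise argument can suffice.

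Granting $A=0$, the triple $(\l,[\,\cdot,\cdot\,]_\l,J|_\l,g|_\l)$ is a Lie algebra, and it is Kähler: integrability of $J|_\l$ descends from that of $J$ on $\g$, while closedness of $\omega|_\l=-dJ\theta|_\l$ follows by restricting $d\omega=\theta\wedge\omega$ to $\Lambda^3\l$, where the right-hand side vanishes and the $\zeta$-components of brackets drop out of $d\omega$ since $\omega(\zeta,z)=-\theta(z)=0$ on $\l$. For flatness I would first check that $\l$ is unimodular: for $x\in\l$ the operator $\ad^\g_x$ sends $\xi\mapsto -Dx\in\l$ and $\zeta\mapsto -Ax=0$, so its diagonal in the splitting agrees with that of $\ad^\l_x$, whence $\tr\ad^\l_x=\tr\ad^\g_x=0$ by unimodularity of $\g$. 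Finally, a unimodular Kähler Lie algebra is flat, so $-dJ\theta$ defines a flat Kähler metric on $\l$; together with the splitting $\g=\R\xi\oplus\ker\theta$ and $D=\ad_\xi|_{\ker\theta}$ this exhibits $\g$ as the asserted double extension and completes the proof.
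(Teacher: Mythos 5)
You should first note that the paper does not prove this statement at all: it is imported verbatim as Theorem 3.9 of Andrada--Origlia \cite{ao}, so there is no in-paper proof to compare against. Judged on its own terms, your architecture is the right one and matches the standard route: split $\mathfrak{g}=\mathbb{R}\xi\oplus\mathbb{R}\zeta\oplus\mathfrak{l}$ with $\xi=\theta^\sharp$, $\zeta=J\theta^\sharp$; use closedness of $\theta$ to see $\ker\theta$ is an ideal; read off the $\zeta$-component of brackets from $d(J\theta)$; reduce everything to the centrality of $\zeta$ in $\ker\theta$; and finish with unimodularity of $\mathfrak{l}$ plus Hano's theorem (which you should cite by name) that a unimodular K\"ahler Lie algebra is flat. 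The routine verifications (unimodularity of $\mathfrak{l}$, Jacobi for $[\cdot,\cdot]_{\mathfrak{l}}$, closedness of $-dJ\theta|_{\mathfrak{l}}$, descent of integrability) are all correct granted centrality, and your remark that $\mathbb{R}\xi\ltimes\mathfrak{su}(2)$ shows solvability is indispensable is well taken.

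The genuine gap is exactly where you flag it, and it is not a small one: it is the crux of the theorem. Your strategy is to get $\mathrm{ad}_\zeta|_{\mathfrak{l}}=0$ from Cartan's criterion via $\zeta\in[\mathfrak{g},\mathfrak{g}]$, but what nondegeneracy of $-dJ\theta|_{\mathfrak{l}}$ gives you is only that the projection $\pi_\zeta\colon[\mathfrak{g},\mathfrak{g}]\to\mathbb{R}\zeta$ is onto, i.e.\ that some element $\zeta+w$ with $w\in\mathfrak{l}$ lies in $[\mathfrak{g},\mathfrak{g}]$. This does not yield $\zeta\in[\mathfrak{g},\mathfrak{g}]$: a subspace of $\mathbb{R}\zeta\oplus\mathfrak{l}$ can surject onto $\mathbb{R}\zeta$ while meeting it trivially, and the obvious repair --- applying Cartan to $B(\zeta,\zeta+w)=0$ --- leaves the uncontrolled term $B(\zeta,w)=\mathrm{tr}_{\mathfrak{l}}(A\circ\mathrm{ad}_w^{\mathfrak{l}})$, which is not visibly zero. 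Nor does ``absorbing the $\mathfrak{l}$-parts via $\mathrm{ad}_\xi$- and $\mathrm{ad}_\zeta$-stability'' obviously terminate: tracking the ideal condition on $[\mathfrak{g},\mathfrak{g}]$ produces constraints on the functional $c$ with graph $[\mathfrak{g},\mathfrak{g}]$ but no contradiction by pure linear algebra. In \cite{ao} the centrality of $\zeta$ is obtained by a separate structural analysis of unimodular solvable Sasakian Lie algebras (the induced Sasakian structure on $\ker\theta$ with Reeb vector $\zeta$), not by this Killing-form shortcut; closing your gap is essentially equivalent in difficulty to that analysis. As written, the proof establishes the easy shell of the theorem but not its core.
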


We can thus prove the main result of the section.
    
\begin{theo*}[Theorem \ref{thm C}]\label{structure} 
    Let $(\Gamma \backslash G, J)$ be a compact solvmanifold with left-invariant complex structure admitting a left-invariant lcK metric. The following are equivalent:
    \begin{enumerate}
      \item $(\Gamma \backslash G, J)$ is Calabi--Yau.
      \item $(\Gamma \backslash G, J)$ admits a left-invariant Vaisman metric.
    \end{enumerate}
    Moreover, such a solvmanifold is diffeomorphic to a finite quotient of the Kodaira manifold.
\end{theo*}
     
\begin{proof} 
    We start by proving $(1) \Rightarrow (2)$. 
    Let $\omega$ be a left-invariant lcK metric with corresponding Lee form $\theta$. 
    Then $Ric(\omega)$ is a left-invariant two-form and moreover, $\theta$ is also left-invariant, implying $\omega$ is Gauduchon. 
    If $(\Gamma \backslash G,J)$ is Calabi--Yau, then $Ric(\omega)=\mathrm{i}\partial\overline{\partial}f$, but since this form is left-invariant, it has to vanish. 
    Therefore, by Theorem \ref{thm A}, $\omega$ is Vaisman. 
    
    Conversely, if $\omega$ is a left-invariant Vaisman metric with Lee form $\theta$ of norm 1, since $\theta^{\sharp}$ and $J\theta^{\sharp}$ form the canonical foliation and $\omega=\theta \wedge J\theta - dJ\theta$, we have $Ric(\omega)=Ric(-dJ\theta)$, with $-dJ\theta$ being the transverse K\" ahler structure. 
    By Theorem \ref{thm: AO}, $-dJ\theta$ is a flat Kähler metric, hence we derive $Ric(\omega)=0$, implying $(\Gamma \backslash G, J)$ is Calabi--Yau.

    We now argue that these manifolds are diffeomorphic to finite quotient of the Kodaira manifold.
    It was proven in \cite[Corollary 5.10]{i} that the fundamental group of a compact Vaisman Calabi--Yau manifold contains a finite index normal subgroup which is either odd-dimensional abelian or $\mathbb{Z}\times H_{\mathbb{Z}}$.
    If the fundamental group contains a finite index normal abelian subgroup, $\Gamma \backslash G$ is finitely covered by a solvmanifold $X$ with abelian fundamental group. Then, thanks to \cite{mos} $X$ is necessarily diffeomorphic to odd-dimensional a torus, since it is compact and has a solvmanifold structure itself, which is impossible being complex.
    This implies, therefore, that the fundamental group has a finite indexed normal subgroup isomorphic to $\mathbb{Z} \times H_{\mathbb{Z}}$, therefore $\Gamma \backslash G$ is diffeormorphic to a finite quotient of the Kodaira manifold.
\end{proof}

\begin{rmk}
    As it is evident from the proof, the first implication $(1) \Rightarrow (2)$ holds for any homogeneous manifolds as we are only exploiting the fact that the Hermitian structure is invariant. The solvmanifold structure plays a role only in the reverse implication.
\end{rmk}

\begin{rmk}
    In \cite{ao} the authors use oscillator groups to construct compact Vaisman solvmanifolds which are not isomorphic to Kodaira manifolds. 
    However, from their construction it is clear that these are finitely covered by them. We also remark that Vaisman solvable Lie algebras need not to be nilpotent, however, we proved that their compact quotients are isomorphic to a nilmanifold up to finite cover. More precisely, for a solvmanifold $\Gamma\backslash G$ with left-invariant Vaisman structure the Lie algebra of $G$ is in general as in Theorem \ref{thm: AO} but up to finite cover $\Gamma\backslash G \cong \mathbb{Z} \times H_{\mathbb{Z}} \backslash \mathbb{R} \times H_{2n+1}$.
\end{rmk}

\begin{prop}
    Let $(\Gamma \backslash G, J)$ be a compact solvmanifold with left-invariant complex structure. Then left-invariant lcK metrics with potential are Vaisman.
\end{prop}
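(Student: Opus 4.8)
The plan is to reduce, by an averaging argument, to the case of a \emph{constant} potential and then to recognise the resulting metric as Vaisman.

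First I would symmetrize the potential. Write the metric as $\omega = d_\theta J d_\theta f$ with $d_\theta = d-\theta\wedge$, where $\theta$, $J$ and $\omega$ are left-invariant. Since $G$ is unimodular (it carries a lattice), the Haar measure descends to a $G$-invariant probability measure $\mu$ on $\Gamma\backslash G$, and I would use Belgun's symmetrization $\alpha\mapsto\alpha^{\mathrm{sym}}$, obtained by averaging against $\mu$ the coefficients of $\alpha$ in a left-invariant coframe $\{e^i\}$. Because each $de^i$ has constant coefficients and because $\int_{\Gamma\backslash G}(X\phi)\,d\mu=0$ for every $X\in\mathfrak g$ and every $\phi\in C^\infty(\Gamma\backslash G)$ — the flow of the left-invariant field $X$ being right translation, which preserves $\mu$ — symmetrization commutes with $d$; as $\theta$ and $J$ are invariant it also commutes with $\theta\wedge\,\cdot\,$ and with $J$, hence with the whole operator $d_\theta J d_\theta$. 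Applying it to $\omega=d_\theta J d_\theta f$ and using $\omega^{\mathrm{sym}}=\omega$ yields $\omega=d_\theta J d_\theta(f^{\mathrm{sym}})$ with $f^{\mathrm{sym}}=\int_{\Gamma\backslash G}f\,d\mu$ a constant $c$. Evaluating $d_\theta J d_\theta$ on a constant (note $d_\theta 1=-\theta$) then gives
\[
\omega = c\,(\theta\wedge J\theta - dJ\theta).
\]

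Next I would extract the Vaisman identity. As $\theta$ and $g$ are invariant, $|\theta|^2=g(\theta^\sharp,\theta^\sharp)$ is an invariant function, hence constant; moreover every invariant Hermitian metric is Gauduchon, so $d^*\theta=0$ (exactly as in the proof of Theorem \ref{structure}). Taking the Hermitian symmetrization $(\cdot,J\cdot)$ of the displayed identity and comparing with \eqref{dJtheta}, the terms $\theta\otimes\theta+J\theta\otimes J\theta$ cancel and one is left with $2(\nabla\theta)^{1,1}=(|\theta|^2-1/c)\,g$. Tracing this with $g$ and using $\operatorname{tr}_g\nabla\theta=-d^*\theta=0$ forces $|\theta|^2=1/c$, so that $(\nabla\theta)^{1,1}=0$ and the genuine Vaisman-type relation $|\theta|^2\omega=\theta\wedge J\theta-dJ\theta$ holds.

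Finally I would upgrade $(\nabla\theta)^{1,1}=0$ to $\nabla\theta=0$, which is the only remaining content. Since $d\theta=0$ and $d^*\theta=0$, the Lee form is harmonic, so the Weitzenböck identity \eqref{bochner} applies; combining \eqref{Weyl-form} and \eqref{eq: Ric W-LC} with $(\nabla\theta)^{1,1}=0$, $|\theta|^2$ constant and $\nabla\theta(\theta^\sharp,\theta^\sharp)=0$ gives $Ric^{LC}(\theta^\sharp,\theta^\sharp)=-Ric(\omega)(J\theta^\sharp,\theta^\sharp)$, so that $\int_M\|\nabla\theta\|^2=\int_M Ric(\omega)(J\theta^\sharp,\theta^\sharp)$. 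I expect this last point to be the main obstacle: one must control the Chern--Ricci term, equivalently show that the Lee field $\theta^\sharp$ is holomorphic (note that on the Kähler universal cover $\pi^*\theta^\sharp$ is already a homothety of the Kähler form, so holomorphicity would promote it to a metric homothety and exhibit the cover as a Kähler cone, forcing $Ric(\omega)(J\theta^\sharp,\theta^\sharp)=0$). In the invariant setting I would resolve this by a direct computation with the Levi--Civita connection of the Lie algebra: the relations $(\nabla\theta)^{1,1}=0$, $d\theta=0$ and unimodularity should force the bracket into the double-extension form of Theorem \ref{thm: AO}, whence $-dJ\theta$ is the transverse flat Kähler metric and $\nabla\theta=0$; equivalently, one shows $\omega$ is Chern--Ricci flat and invokes Theorem \ref{main}.
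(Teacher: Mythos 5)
Your first half coincides with the paper's proof: the same Belgun-type averaging against the bi-invariant volume form (justified by unimodularity of $G$), commuting with $d$, $\theta\wedge\cdot$ and $J$, reduces the potential to a constant and yields $\omega=c\,(\theta\wedge J\theta-dJ\theta)$. Your extra observation that comparing with \eqref{dJtheta} and tracing forces $c=1/|\theta|^2$ and $(\nabla\theta)^{1,1}=0$ is correct and is a nice supplement to what the paper writes.

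The gap is the final step. At exactly the point where you get stuck, the paper simply invokes a known result (Proposition 3.2.2 of Istrati's thesis \cite{i18}): an lcK metric satisfying $|\theta|^2\omega=\theta\wedge J\theta-dJ\theta$ with $|\theta|$ constant is Vaisman. You instead attempt to prove this, and neither of your two sketches is carried out. The Weitzenb\"ock route correctly reduces the problem to showing $\int_M Ric(\omega)(J\theta^\sharp,\theta^\sharp)\,\mathrm{vol}(\omega)\leq 0$, but you offer no argument for this beyond the hope that $\theta^\sharp$ is holomorphic --- which is essentially equivalent to what you are trying to prove. Your alternative suggestions are worse: Theorem \ref{thm: AO} has the Vaisman condition as a \emph{hypothesis}, so it cannot be used to conclude that the bracket has the double-extension form; and Theorem \ref{main} requires $Ric(\omega)=t\,dJ\theta$ with $t\leq 0$, which you have not established (and which, by the discussion after the Corollary in Section 4, fails for instance on Oeljeklaus--Toma manifolds, so it is not automatic for invariant lcK metrics). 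The statement you need is true and citable, so the fix is to quote \cite[Prop.\ 3.2.2]{i18} (or prove it), but as written the proof is incomplete precisely at its only nontrivial analytic step.
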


\begin{proof}
    Let us set $M:=\Gamma \backslash G$, and let $\omega$ be a left-invariant lcK metric with potential on $(M, J)$.
    From the definition of lcK metric with potential we know that there exists a smooth function $f\in C^\infty(M)$ such that $$\omega = d_\theta Jd_\theta f.$$
    Since $G$ is a simply connected Lie group admitting a co-compact discrete subgroup, it is unimodular \cite[Lemma 6.2]{mil} and hence it admits a bi-invariant volume form $\nu$. 
    Since both $\omega$ and $J$ are left-invariant, then also $\theta$ is.
    Thus, by an averaging procedure using the bi-invariant volume form, we can assume $f$ left-invariant, i.e. constant.
    Let us provide more details on this.

    First of all, we suppose that $\nu$ has unitary volume. Given any $k$-form $\alpha\in\A^k(M)$ on the solvmanifold $M$, we define a $k$-form $\alpha_{\nu}\in\A^k(\g)$ on the Lie algebra of $G$ by
    $$
    \alpha_{\nu}(X_1,\ldots,X_k)=\int_{p\in M} \alpha_p
    (X_1\!\mid_p,\ldots,X_k\!\mid_p) \, \nu\ , \quad\quad \mbox{for}\
    X_1,\ldots,X_k\in \g,
    $$
    where $X_j\!\mid_p$ is the value at the point $p\in M$ of the
    projection on $M$ of the left-invariant vector field $X_j$ on the
    Lie group $G$.
    Obviously, $\alpha_{\nu}=\alpha$ for any tensor field $\alpha$ coming from a
    left-invariant one. 
    Moreover, in~\cite{bel00} it is proved that for any $k$-form $\alpha$ on $M$,
    $(d\alpha)_{\nu}=d\alpha_{\nu}$, while, a simple calculation shows that
    $(\alpha_\nu\wedge\beta)_\nu=\alpha_\nu\wedge\beta_\nu$, for any
    $\alpha \in \A^k(M)$ and $\beta \in \A^l(M)$.
    Therefore, the averaging commutes with $d_\theta=d-\theta_\nu\wedge\cdot$.
    Finally, since $J$ is invariant, we also have $(J\alpha)_\nu = J\alpha_\nu$ for any $\alpha\in\A^k(M)$. 
    Putting these together we get
    $$\omega=\omega_\nu=\left(d_\theta Jd_\theta f\right)_\nu = d_\theta Jd_\theta f_\nu.$$

    It follows that $\omega$ satisfies $\omega= \tfrac{1}{\|\theta\|^2}(\theta\wedge J\theta - dJ\theta)$, with $\|\theta\|^2$ constant, which can be taken equal to $1$.
    Then it has to be Vaisman due to \cite[Proposition 3.2.2]{i18}.
\end{proof}

\begin{cor}
    Let $(M, J)$ be a solvmanifold with left-invariant complex structure and a left-invariant lcK metric. If $\mathrm{dim}_{\mathbb{R}} H^{1, 1}_{BC}=1$, then $c_1^{BC}(M)$ is either 0, which is the Vaisman case, or $\left[\frac{n}{2}\left(\frac{||\nabla \theta||^2}{||\nabla \theta^{1, 1}||^2}-1\right)dJ\theta\right]$, which is the non-Vaisman case.
\end{cor}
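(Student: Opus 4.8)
Write $\theta$ for the Lee form of the given left-invariant lcK metric $\omega$. Since $\omega$ and $J$ are left-invariant, so is $\theta$, whence $d^*\theta=0$ and $\omega$ is Gauduchon, and the Chern--Ricci form $Ric(\omega)$ is a left-invariant closed $(1,1)$-form representing $c_1^{BC}(M)$. The plan is to read off $c_1^{BC}(M)$ according to whether $\omega$ is Vaisman, the enabling fact being that on our solvmanifold a \emph{left-invariant} closed $(1,1)$-form $\beta$ that is $\partial\bar\partial$-exact must vanish. Indeed, writing $\beta=\im\partial\bar\partial g$ and averaging against the bi-invariant volume $\nu$ as in the proof of the Proposition on invariant lcK metrics with potential --- so that averaging commutes with $d$ and with $J$, hence with $\partial\bar\partial$ --- replaces $g$ by the constant $g_\nu$; since $\beta$ equals its own average this gives $\beta=\im\partial\bar\partial g_\nu=0$. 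Consequently the space $V$ of left-invariant closed $(1,1)$-forms injects into the one-dimensional $H^{1,1}_{BC}(M)$.

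First I would settle the Vaisman alternative. If $\omega$ is Vaisman then $-dJ\theta$ is its transverse K\"ahler form, which is flat by Theorem \ref{thm: AO}, so $Ric(\omega)=Ric(-dJ\theta)=0$ and $c_1^{BC}(M)=0$. Conversely, if $c_1^{BC}(M)=0$ then $Ric(\omega)$, being left-invariant and $\partial\bar\partial$-exact, vanishes by the injectivity just established, and Theorem \ref{main} with $t=0$ forces $\omega$ to be Vaisman. Hence $c_1^{BC}(M)=0$ precisely when $\omega$ is Vaisman, which is the first alternative of the statement.

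It remains to treat the non-Vaisman case, where $\nabla\theta\neq0$, so $\theta\neq0$ and $c_1^{BC}(M)\neq0$. The main step is to upgrade the cohomological information to an identity of forms $Ric(\omega)=t\,dJ\theta$. To exploit the injectivity lemma I would first verify $dJ\theta\neq0$: tracing \eqref{dJtheta} against $g$ gives the $g$-trace of $dJ\theta(\cdot,J\cdot)$ as $-2d^*\theta-2(n-1)|\theta|^2$, so were $dJ\theta$ to vanish, the Gauduchon condition $d^*\theta=0$ would force $|\theta|^2\equiv0$ (recall $n\ge2$), contradicting $\theta\neq0$. Thus $dJ\theta$ is a nonzero element of $V$, so $[dJ\theta]_{BC}$ spans $H^{1,1}_{BC}(M)$; writing $[Ric(\omega)]_{BC}=t\,[dJ\theta]_{BC}$ gives $Ric(\omega)-t\,dJ\theta=\im\partial\bar\partial g$, and one last application of the averaging lemma to this left-invariant $\partial\bar\partial$-exact form yields the pointwise identity $Ric(\omega)=t\,dJ\theta$.

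With this identity in hand the coefficient is pinned down exactly as in the Corollary following Theorem \ref{main}: relation \eqref{eqt} now applies, and since the norms of left-invariant tensors are constant it collapses to $(n+2t)\,||(\nabla\theta)^{1,1}||^2=n\,||\nabla\theta||^2$; as $\omega$ is not Vaisman one has $(\nabla\theta)^{1,1}\neq0$, so $t=\tfrac{n}{2}\big(\tfrac{||\nabla\theta||^2}{||(\nabla\theta)^{1,1}||^2}-1\big)$. This $t$ is positive because $||(\nabla\theta)^{1,1}||^2\le||\nabla\theta||^2$ with equality only when $\nabla\theta$ is $J$-invariant, a case excluded here since by \eqref{Jtheta} it would give $\nabla\theta=0$. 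Therefore $c_1^{BC}(M)=\big[\tfrac{n}{2}\big(\tfrac{||\nabla\theta||^2}{||(\nabla\theta)^{1,1}||^2}-1\big)dJ\theta\big]_{BC}$, the second alternative. I expect the delicate point to be this passage from proportionality in $H^{1,1}_{BC}(M)$ to the exact identity $Ric(\omega)=t\,dJ\theta$: it rests on both directions of the averaging lemma and, through the trace computation, on the Gauduchon hypothesis to keep $dJ\theta$ from degenerating to zero.
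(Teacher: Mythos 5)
Your argument is correct and follows the route the paper intends for this corollary: the Vaisman alternative comes from Theorem \ref{structure}, and in the non-Vaisman case the one-dimensionality of $H^{1,1}_{BC}$ together with Belgun-type averaging upgrades $[Ric(\omega)]_{BC}=t[dJ\theta]_{BC}$ to the pointwise identity $Ric(\omega)=t\,dJ\theta$, after which the corollary following Theorem \ref{main} pins down $t$. Your explicit justification that left-invariant $\partial\bar\partial$-exact forms vanish, and that $dJ\theta\neq 0$ in the non-Vaisman case, supplies exactly the details the paper leaves implicit.
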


\section{Relation with Sasaki geometry}\label{s: sasaki}
Sasaki metrics are geometric structures related to  Vaisman manifolds via their strong connection to K\" ahler manifolds. By definition, a $(2n-1)$-dimensional Riemannian manifold $(S, g_S)$ is called \textit{Sasaki} if the cone $S \times \mathbb{R}_+$ admits a complex structure for which the metric $dr^2+r^2g_S$ is K\"ahler. 
By the proof of the structure theorem in \cite{ov03}, any Vaisman manifold $(M, J, g)$ admits a $\mathbb{Z}$-cover $\pi: S \times \mathbb{R}_+ \rightarrow M$, where $\mathbb{Z}$ acts by $(x, r) \mapsto (\phi(x), \lambda r)$ and $\phi$ is an automorphism of $S$ and $\lambda \neq 1$. Moreover, if $g$ is of rank 1, then $dr^2+r^2g_S=e^{-f}\pi^*g$, where $f$ satisfies $df=\pi^*\theta$ for the Lee form $\theta$. Since the cone metric is actually $\frac{\mathrm{i}}{2}\partial\overline{\partial}r^2$, we derive $\pi^*\theta=-2d\, \mathrm{log}\, r$. 

We can make a connection between Vaisman metrics of rank 1 which satisfy the Einstein-type condition $Ric(\omega)=tdJ\theta$ and the {\em Sasaki $\eta$-\textit{Einstein} metrics}. By definition, these are given by the following condition on the Ricci tensor:
\begin{equation*}
    \mathrm{Ric}_{g_S}=\alpha g_S+\beta \eta \otimes \eta,
\end{equation*}
where $\eta:=d^c\, \mathrm{log}\, r$ and $\alpha, \beta \in \mathbb{R}$, with $\alpha+\beta=2n-2$.

\begin{prop}
    Let $M$ be a manifold of $\mathrm{dim}_{\mathbb{C}}=n$. Any Vaisman metric of rank 1 with $Ric(\omega) = tdJ\theta$, for $t\in\R$, on $M$ corresponds to a Sasaki $\eta$-Einstein metric satisfying:
    \begin{equation}\label{ricci-sasaki}
        \mathrm{Ric}_{g_S}=-(2+4t) g_S + (2n+4t)\, \eta \otimes \eta
    \end{equation}
\end{prop}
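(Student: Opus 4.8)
The plan is to transport the Einstein-type equation to the K\"ahler cone provided by the structure theorem and to read off the Sasaki Ricci tensor there. Write $\bar g = dr^2 + r^2 g_S = e^{-f}\pi^*g$ for the K\"ahler cone metric on the $\Z$-cover $S\times\R_+$, with $df = \pi^*\theta = -2\,d\log r$, contact form $\eta = d^c\log r$ and Reeb field $\xi$ (so that $J\xi = -r\,\p_r$, $J\p_r = \tfrac1r\xi$, and $J$ restricts to $\Phi$ on $\ker\eta$). The first step is to lift the hypothesis $Ric(\omega)=t\,dJ\theta$. Since \eqref{Weyl-form} comes from a purely local $\log\det$ identity, it is valid on any cover on which $\theta$ is exact, in particular on $S\times\R_+$; applied to $\Omega=\bar g$ it yields $Ric^{Ch,J}_{\bar g}=\tfrac n2\,\pi^*dJ\theta+\pi^*Ric(\omega)=\left(\tfrac n2+t\right)\pi^*dJ\theta$. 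Because $\bar g$ is K\"ahler, its Chern--Ricci form is its Riemannian Ricci form, so this pins down $\mathrm{Ric}_{\bar g}$.

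Next I would rewrite $\pi^*dJ\theta$ through the Sasaki data. From $\pi^*\theta=-2\,d\log r$ and $\eta=d^c\log r$ one gets $\pi^*(J\theta)=2\eta$, hence $\pi^*dJ\theta = 2\,d\eta$. Using $d\eta = 2g_S(\cdot,\Phi\cdot)$, $\Phi=J|_{\ker\eta}$ and $\Phi^2=-\mathrm{Id}+\eta\otimes\xi$, a direct computation shows that the symmetric tensor $X,Y\mapsto d\eta(X,JY)$ equals $-2\,(g_S-\eta\otimes\eta)$ on directions tangent to $S$ and vanishes as soon as one argument is $\p_r$. Combining with the previous step and converting the Ricci form into the Ricci tensor via $\mathrm{Ric}_{\bar g}(X,Y)=Ric^{Ch,J}_{\bar g}(X,JY)$, I obtain $\mathrm{Ric}_{\bar g} = -(2n+4t)\,(g_S-\eta\otimes\eta)$ on $TS$ and $\mathrm{Ric}_{\bar g}(\p_r,\cdot)=0$.

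Independently I would compute $\mathrm{Ric}_{\bar g}$ from the warped-product structure $\bar g = dr^2 + r^2 g_S$, where $\dim S = 2n-1$: the standard cone formulas give $\mathrm{Ric}_{\bar g}(V,W)=\mathrm{Ric}_{g_S}(V,W)-(2n-2)\,g_S(V,W)$ for $V,W$ tangent to $S$, and $\mathrm{Ric}_{\bar g}(\p_r,\cdot)=0$. Equating the two expressions for $\mathrm{Ric}_{\bar g}$ and solving for $\mathrm{Ric}_{g_S}$ gives $\mathrm{Ric}_{g_S}=-(2+4t)\,g_S+(2n+4t)\,\eta\otimes\eta$, which is exactly \eqref{ricci-sasaki}; the check $\alpha+\beta=(-2-4t)+(2n+4t)=2n-2$ confirms it is a genuine $\eta$-Einstein structure, and $t=-\tfrac n2$ recovers the Sasaki--Einstein case $\mathrm{Ric}_{g_S}=(2n-2)g_S$ (Ricci-flat cone). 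The main obstacle is the consistent bookkeeping of normalizations and signs --- the constant in $\pi^*(J\theta)=2\eta$, the passage from the Chern--Ricci form to the Ricci tensor under the convention $Ric^{Ch,J}=Ric^{Ch}(J\cdot,\cdot)$, and the warped-product signs --- which must all be fixed coherently; a minor point is to justify that \eqref{Weyl-form} descends to the $\Z$-cover, which it does since it is local.
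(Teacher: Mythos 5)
Your proposal is correct and follows essentially the same route as the paper: pass to the K\"ahler cone $e^{-f}\pi^*g=dr^2+r^2g_S$, compute its Ricci form from the conformal-change identity to get a multiple of $d\eta$, and combine the warped-product Ricci formula with the identity relating $g_S$, $\eta\otimes\eta$ and $d\eta\circ\Phi$ to solve for $\mathrm{Ric}_{g_S}$. The only differences are cosmetic (opposite sign conventions for $\pi^*(J\theta)$ and for $d\eta\circ\Phi$, which cancel, and an explicit check of the $\p_r$ directions), and your final constants agree with \eqref{ricci-sasaki}.
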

\begin{proof}
    Consider the $\mathbb{Z}$-cover $\pi:S \times \mathbb{R}_+ \rightarrow M$. Since $g$ is of rank 1, $\tilde{g}:=e^{-f}\pi^*g=dr^2+r^2g_S$. This is a warped product metric and, therefore, its Ricci tensor is given by \cite[Proposition 9.106]{besse}: 
    \begin{equation}\label{eq: wrap}
        \mathrm{Ric}_{g_S}=Ric(\tilde{\omega})\circ \Phi+(2n-2)g_S,
    \end{equation}
    where $\Phi$ is the endomorphism of $TS$ induced by the complex structure of the cone $S \times \mathbb{R}_{+}$. We denote here by the composition of a two-form $\alpha$ with the endomorphism $\Phi$ the tensor obtained by $\alpha \circ \Phi (X, Y):=\alpha(X, \Phi Y)$.\\
    By the hypothesis on the Chern--Ricci form and the definition of $\eta$ we get
    \begin{align*}
        Ric(\tilde{\omega}) &= \pi^*Ric(\omega) + \frac{n}{2}dd^cf
            = t\pi^*(dJ\theta) + \frac{n}{2}dJdf
            = \left(t+\frac{n}{2}\right)dJ\pi^*\theta
            =-(2t+n)d\eta.
    \end{align*}
    Combined with \eqref{eq: wrap} and $g_S=\eta \otimes \eta + \frac{d\eta}{2} \circ \Phi$, one retrieves \eqref{ricci-sasaki}. 
\end{proof}
Conversely, any Sasaki $\eta$-Einstein metric on $S$ satisfying \eqref{ricci-sasaki} gives a Vaisman metrics on the manifolds obtained by quotienting $S \times \mathbb{R}_+$ to $(x, r) \mapsto (\phi(x), \lambda r)$ which satisfies $Ric(\omega) = tdJ\theta$.



\begin{thebibliography}{100}

    \bibitem[Abh56]{ab56} S. Abhyankar, {\em On the valuations centered in a local domain}, Amer. J. Math. (1956), 78:321–348.

    \bibitem[AB91]{ab91} L. Alessandrini, G. Bassanelli, {\em Smooth proper modifications of compact K\"ahler manifolds}, Complex analysis (Wuppertal, 1991), 1–7, Aspects Math., E17, Friedr. Vieweg, Braunschweig, 1991.

    \bibitem[AO20]{ao} A. Andrada, M. Origlia, {\em Vaisman solvmanifolds and relations with other geometric structures}, Asian J. Math. (24) (2020), 117--146

    \bibitem[AO23]{ao23} D. Angella, A. Otiman, {\em A note on compatibility of special Hermitian structures}, arXiv preprint arXiv:2306.02981 (2023).

    \bibitem[BL23]{bl} G. Barbaro,M. Lejmi, {\em Second Chern--Einstein metrics on four-dimensional almost-Hermitian manifolds}, Complex Manifolds 10 (2023), no. 1, Paper No. 20220150, 24 pp.

    \bibitem[Baz17]{baz17} G. Bazzoni, {\em Vaisman nilmanifolds}, Bull. Lond. Math. Soc. 49.5 (2017), pp. 824–830.

    \bibitem[Bel00]{bel00} F. Belgun, {\em On the metric structure of non-K\"ahler complex surfaces}, Math. Ann. 317 (2000), 1–40.

    \bibitem[Bes08]{besse} A. Besse. {\em Einstein manifolds}, Classics in Mathematics. Reprint of the 1987 edition. Springer-Verlag, Berlin, 2008, pp. xii+516.

    \bibitem[Bru11]{br} M. Brunella, {\em Locally conformally K\"ahler metrics on Kato surfaces}, Nagoya Math. J 202 (2011), 77–81.

    \bibitem[DV23]{dv23} A. Deaconu and V. Vuletescu, {\em On locally conformally K\"ahler metrics on Oeljeklaus--Toma manifolds}, Manuscripta Math. 171 (2023), no. 3-4, 643–647.


    \bibitem[Gau95]{gau} P. Gauduchon, {\em Structures de Weyl-Einstein, \' espaces de twisteurs et vari\' et\' es de type $S^1 \times S^3$}, J. reine angew.. Math. 469 (1995), 1–50.

    \bibitem[GO98]{go98} P. Gauduchon, L. Ornea, {\em Locally Conformally Kähler Metrics on Hopf Surfaces}, Ann. Inst. Fourier 48 (1998), 1107–1127.

    \bibitem[Gom25]{gomes} L. H. S. Gomes, {\em Vaisman Solvmanifolds as Finite Quotients of Kodaira-Thurston Nilmanifolds}, arXiv preprint arXiv:2504.03557 (2025).

    \bibitem[GMPP]{gmpp} G. Grantcharov, C. McLaughlin, H. Pedersen, Y. S. Poon, {\em Deformations of Kodaira manifolds}, Glasg. Math. J. 46 (2004), no. 2, 259–281. 

    \bibitem[GH94]{GriHar} P. Griffiths, J. Harris, {\em Principles of algebraic geometry} Reprint of the 1978 original. Wiley Classics Library. John Wiley \& Sons, Inc., New York, 1994. xiv+813 pp.

    \bibitem[Hae85]{Hae} A. Haefliger, {\em Deformations of transversely holomorphic flows on spheres and deformations of Hopf manifolds}, Compositio Mathematica, 55 (1985), 241–251.

    \bibitem[Has06]{has06} K. Hasegawa, {\em A note on compact solvmanifolds with K\"ahler structures}, Osaka J. Math. 43.1 (2006), pp. 131–135.

    \bibitem[Ino74a]{ino1} M. Inoue, {\em On surfaces of class VII 0}, Invent. Math. 24, 269-310 (1974)

    \bibitem[Ino74b]{ino2} M. Inoue, {\em New surfaces with no meromorphic functions}, Proc. Int. Cong. Math., Vancouver, vol. 1, 423426 (1974)

    \bibitem[Ino77]{ino3} M. Inoue, {\em New surfaces with no meromorphic functions, II}, Complex Analysis and Algebraic Geometry, Iwanami Shoten Publ. and Cambridge Univ. Press, pp. 91-106 (1977).
    
    \bibitem[Ist18]{i18} N. Istrati, {\em Conformal structures on compact complex manifolds}, PHD thesis, IMJ-PRG, 2018

    \bibitem[Ist25]{i} N. Istrati, {\em Vaisman manifolds with vanishing first Chern class}, J. Eur. Math. Soc., Vol. 29, Issue 3, 2025, 935–-976

    \bibitem[IO19]{IO19} N. Istrati, A. Otiman, {\em De Rham and Twisted Cohomology of Oeljeklaus–Toma manifolds}, Annales de l'Institut Fourier, Vol. 69 (2019) no. 5, pp. 2037-2066.

    \bibitem[IO25]{io25} N. Istrati, A. Otiman,  {On some properties of Hopf manifolds}, arXiv preprint arXiv:2503.19562 (2025).

    \bibitem[IOPR]{iopr} N. Istrati, A. Otiman, M. Pontecorvo, M. Ruggiero, {\em Toric Kato manifolds}, J. Éc. polytech. Math.  9 (2022), 1347-1395.

    \bibitem[KO05]{ko} Y. Kamishima, L. Ornea, {\em Geometric flow on compact locally conformally K\"ahler manifolds}, Tohoku Math. J. 57 (2005), 201–221. 
    
    \bibitem[Kash80]{kash} T. Kashiwada, S. Sato, {\em On harmonic forms in compact locally conformal K\"ahler manifolds with the parallel Lee form}. Ann. Fac. Sci. Univ. Nat. Za\"ire (Kinshasa) Sect. Math.-Phys. 6 (1980), no. 1-2, 17–29.
    
    \bibitem[Kas13]{kas} H. Kasuya, {\em Vaisman metrics on solvmanifolds and Oeljeklaus--Toma manifolds}, Bull. London Math. Soc. 45 (2013), 15-26.

    \bibitem[Kat77]{kat77} M. Kato, {\em Compact complex manifolds containing ”global” spherical shells I}, Proceedings of the International Symposium on Algebraic Geometry (Kyoto Univ., Kyoto, 1977), pp. 45–84.

    \bibitem[Kol96]{kollar} J. Kollár, {\em Rational curves on algebraic varieties}, Ergebnisse der Mathematik und ihrer Grenzgebiete. 3. Folge. A Series of Modern Surveys in Mathematics, 32. Springer-Verlag, Berlin, 1996. viii+320 pp. 

    \bibitem[Mil76]{mil} J. Milnor, {\em Curvature of left invariant metrics on Lie groups}, Adv. Math. 21 (1976), 293–329.

    \bibitem[Mor17]{mor17} A. Moroianu, S. Moroianu, {\em On pluricanonical locally conformally K\"ahler manifolds}, Int. Math. Res. Not. IMRN 2017, no. 14, 4398–4405.

    \bibitem[Mos54]{mos} G. D. Mostow. {\em Factor spaces of solvable groups}, Ann. of Math. (2) 60 (1954), pp. 1–27.

    \bibitem[OT05]{ot05} K. Oeljeklaus, M. Toma, {\em Non-K\"ahler compact complex manifolds associated to number fields}, Ann. Inst. Fourier 55 (2005), 1291–1300.

    \bibitem[OV03]{ov03} L. Ornea, M. Verbitsky, {\em Structure theorem for compact Vaisman manifolds}, (English summary) Math. Res. Lett. 10 (2003), no. 5-6, 799–805.

    \bibitem[OV10a]{ov10} L. Ornea, M. Verbitsky, {\em Locally conformal K\"ahler manifolds with potential}, Math. Ann. 248 (2010), 25–33.

    \bibitem[OV10b]{ov10b} L. Ornea, M. Verbitsky, {\em Topology of locally conformally K\"ahler manifolds with potential}, Int. Math. Res. Not. IMRN 2010, no. 4, 717–726.

    \bibitem[OV11]{ov11} L. Ornea, M. Verbitsky, {\em Oeljeklaus–Toma manifolds admitting no complex subvarieties}, Math. Res. Lett. 18 (2011), no. 4, 747-754.

    \bibitem[OV16]{ov16} L. Ornea, M. Verbitsky, {\em Locally conformally K\"ahler metrics obtained from pseudoconvex shells}, Proceedings Amer. Math. Soc. 144 (2016), 325–335. 

    \bibitem[OV23]{ov23} L. Ornea, M. Verbitsky, {\em Non-linear Hopf manifolds are locally conformally K\"ahler}, J. Geom. Anal. Volume 33 (2023), n. 201.

    \bibitem[OV24]{ov24},  L. Ornea, M. Verbitsky, {\em A Calabi--Yau theorem for Vaisman manifolds}, Commun. Anal. Geom., 32 (10) (2024), 2889--2900.

    \bibitem[OV25]{ov25} L. Ornea, M. Verbitsky, {\em Balanced metrics and Gauduchon cone of locally conformally Kähler manifolds}, Int. Math. Res. Not. IMRN 2025, no. 3, Paper No. rnaf014, 10 pp.

    \bibitem[OVV13]{ovv} L. Ornea, M. Verbitsky, V. Vuletescu, {\em Blow-ups of locally conformally K\"ahler manifolds}, Int. Math. Res. Not. IMRN 2013, no. 12, 2809–2821.


    \bibitem[Pet06]{peter} P. Petersen, {\em Riemannian geometry}, Second edition. Graduate Texts in Mathematics, 171. Springer, New York, 2006. xvi+401 pp.

    \bibitem[Saw07]{saw07} H. Sawai, {\em Locally conformal K¨ahler structures on compact nilmanifolds with left-invariant complex structures}, Geom. Dedicata 125 (2007), pp. 93–101.

    \bibitem[Saw17]{saw17} H. Sawai, {\em Structure theorem for Vaisman completely solvable solvmanifolds}, J. Geom. Phys. 114 (2017), pp. 581–586.

    \bibitem[Sha13]{shaf} I. Shafarevich, {\em Basic Algebraic Geometry 1: Varieties in Projective Space}, Springer-Verlag (2013).
    
    \bibitem[Tos15]{tos} V. Tosatti, {\em Non-K\" ahler Calabi--Yau manifolds}, Analysis, complex geometry, and mathematical physics: in honor of Duong H. Phong 644 (2015), 261--277.

    \bibitem[TW15]{tw} V. Tosatti, B. Weinkove, {\em On the evolution of a Hermitian metric by its Chern--Ricci form}, J. Differential Geom. 99 (2015), no. 1, 125–163.

    \bibitem[Tri82]{tri} F. Tricerri, {\em Some examples of locally conformal K\"ahler manifolds}, Rend. Dem. Mat. Univers. Politecn. Torino 40 (1) (1982), 81–92.

    \bibitem[Tsu94]{ts} K. Tsukada, {\em Holomorphic forms and holomorphic vector fields on compact generalized Hopf manifolds}, Compositio Math. 93 (1) (1994), 1–-22.

    \bibitem[Uga07]{uga07} L. Ugarte, {\em Hermitian structures on six-dimensional nilmanifolds}, Transform. Groups 12.1 (2007), pp. 175–202.

    \bibitem[Vai76]{va76} I. Vaisman, {\em On locally conformal almost K\"ahler manifolds}, Israel J. Math. 24 (1976), 338–351.
    
    \bibitem[Vai82]{vai} I. Vaisman, {\em Generalized Hopf manifolds}, Geom. Dedicata 13 (1982), no. 3, 231–255.
    
\end{thebibliography}
\end{document}